\newtheorem{theorem}{Theorem}[section]
\newtheorem{lemma}{Lemma}[section]
\newtheorem{proposition}{Proposition}[section]
\newtheorem{definition}{Definition}[section]
\newtheorem{corollary}{Corollary}[section]
\newtheorem{remark}{Remark}
\numberwithin{equation}{section}
\theoremstyle{definition}
\theoremstyle{remark}
\begin{document}

\title{On the positivity of a quasi-local mass in general dimensions}
\author{Kwok-Kun Kwong}

\address{~School of Mathematical Sciences, Monash University, Victoria 3800, Australia.} \email{kwok-kun.kwong@monash.edu}

\thanks{Research partially supported by Australian Research Council Discovery Grant \#DP0987650 }

\renewcommand{\subjclassname}{\textup{2010} Mathematics Subject Classification}
\subjclass[2010]{Primary 53C20, Secondary 83C99}
\date{April, 2013}

\keywords{}

\begin{abstract}
In this paper, we obtain a positivity result of a quasi-local mass integral as proposed by Shi and Tam in general dimensions. The main argument is based on the monotonicity of a mass integral in a foliation of quasi-spherical metrics and a positive mass type theorem which was proved by Wang and Yau in the three dimensional case, and is shown here in higher dimensions using spinor methods.
 \end{abstract}
\maketitle\markboth{}{}

The well-known positive mass theorem states that for a complete asymptotically flat manifold $(M, g)$ such that $g$ behaves like Euclidean at infinity near each end and suppose its scalar curvature is non-negative, then its ADM mass \cite{ADM} of each end is non-negative. Moreover, if the ADM mass of one of the end is zero, then $(M,g)$ is actually the Euclidean space. The positive mass theorem was proved by Schoen and Yau
\cite{schoen1979proof, schoen1981proof} using minimal surface techniques. Later on, Witten \cite{witten} (see also \cite{parker-taubes, BTK86}) gave a simplified proof the positive mass theorem using the spinors. Since then the method of spinors has been adopted by many people to prove positive mass type theorems or some rigidity results, see for example \cite{andersson1998scalar,liu2003positivity,shi2002positive,WY, Wang}.

In particular, let us look at some results in this direction. Wang and Yau \cite{WY}  developed a quasi-local mass for a three dimensional manifold with boundary whose scalar curvature is bounded from below by some negative constant. Using spinors methods, they were able to prove that this mass is non-negative. Later on, Shi and Tam \cite{ShiTam2007} also proved a similar result in the three dimensional case, but with a simpler and more explicit definition of the mass. In this paper, we will show that the results of Shi-Tam and Wang-Yau also hold in higher dimensions.

More precisely, in \cite{ShiTam2007}, Shi and Tam proved the following:

\begin{theorem}(\cite{ShiTam2007} Theorem 3.1)\label{thm: 1}
Let $(\Omega, g)$ be a compact $3$-dimensional orientable manifold with smooth
boundary $\Sigma=\partial \Omega$, homeomorphic to a $2$-sphere. Assuming the following conditions:
\begin{enumerate}
\item
The scalar curvature $R$ of $(\Omega, g)$ satisfies $R\ge -6 k^2$ for some $k>0$,
\item
$\Sigma$ is a topological sphere with Gaussian curvature $K>-k^2$ and mean curvature $H>0$, so that $\Sigma$ can be isometrically embedded into $\mathbb{H}_{-k^2}^3$ (the hyperbolic space with curvature $-k^2$) with mean curvature $H_0$.
\end{enumerate}
Then there is a future  time-like vector-valued function $W$ on $\Sigma$ such that the vector
\begin{equation*}
\int_{\Sigma}(H_0-H) W \,d\Sigma\in \mathbb R^{3,1}
\end{equation*}
is future  non-spacelike. Here $W=(x_1, x_2, x_3, \alpha t)$ for some $\alpha>1$ depending only on the intrinsic geometry of $\Sigma$, where $X=(x_1, x_2, x_3, t)\in\mathbb{H}^3_{-k^2}\subset \mathbb{R}^{3, 1}$ is the position vector of the embedding of $\Sigma$.
\end{theorem}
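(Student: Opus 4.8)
The plan is to realise $\int_\Sigma(H_0-H)W\,d\Sigma$ as the weighted quasi-local mass at the inner boundary of a complete asymptotically hyperbolic manifold obtained by attaching a ``quasi-spherical'' collar to $(\Omega,g)$, and then to dominate it direction by direction by the mass vector at infinity, which is future non-spacelike by the positive mass theorem for asymptotically hyperbolic manifolds (\cite{Wang}; in the higher dimensional version treated in this paper, by the spinor method). For the reference geometry, condition (2) embeds $\Sigma$ isometrically as a convex surface $\widehat\Sigma\subset\mathbb H^3_{-k^2}$: the Gauss equation forces $\det A=K+k^2>0$, hence $H_0>0$, and the embedding is unique up to ambient isometry. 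The exterior of $\widehat\Sigma$ is foliated by the outward equidistant surfaces $\widehat\Sigma_r$, $r\ge 0$, which remain smooth and convex; let $g_r$ and $H_0(r)$ be their induced metric and mean curvature, so that $g_0$ is the induced metric of $\Sigma$ and $H_0(0)=H_0$. In the hyperboloid model, where $t^2-\sum_i x_i^2=k^{-2}$, the linear coordinates $x_1,x_2,x_3,t$ restrict to static potentials of $\mathbb H^3_{-k^2}$, namely $\overline\nabla^2V=k^2V\,\bar g$; since $t\ge k^{-1}>0$ and $|x_i|<t$ pointwise, every combination $\alpha t\pm\langle v,\vec x\rangle$ with $|v|\le 1$ and $\alpha\ge 1$ is a \emph{positive} static potential.

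On $N=\Sigma\times[0,\infty)$ I would take the metrics $g_u=u^2\,dr^2+g_r$ and impose $R(g_u)=-6k^2$; this reduces to a parabolic equation for $u$ along the foliation, schematically
\begin{equation*}
2H_0(r)\,\frac{\partial u}{\partial r}=u^2\,\Delta_{g_r}u+\big(\text{intrinsic curvature of }g_r\big)\,u-\big(\text{order-one terms}\big)\,u^3 ,
\end{equation*}
which, with the initial datum $u(0,\cdot)=H_0(0)/H>0$, admits a global positive solution by the maximum principle, with $u\to1$ and $g_u$ asymptotically hyperbolic as $r\to\infty$. Since the mean curvature $H_u(0)$ of $\Sigma\times\{0\}$ in $g_u$ equals $H_0(0)/u(0,\cdot)=H$, the glued manifold $\widehat M=\Omega\cup_\Sigma N$ is $C^0$ across $\Sigma$ with matching mean curvatures; after a smoothing of the metric near $\Sigma$ as in \cite{ShiTam2007}, $\widehat M$ is complete and asymptotically hyperbolic with distributional scalar curvature $\ge -6k^2$, and carries a well-defined mass vector $p=(\vec p,p_0)\in\mathbb R^{3,1}$.

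The heart of the proof is a monotonicity formula. To each positive static potential $V$ one attaches a Shi--Tam type mass $\mathfrak m_V(r)$ of the leaf $\widehat\Sigma_r$ which, up to a fixed positive constant, equals $\int_{\widehat\Sigma_r}V\big(H_0(r)-H_u(r)\big)$ at $r=0$ and converges to the $V$-component of $p$ as $r\to\infty$. Differentiating $\mathfrak m_V$ in $r$, substituting the flow equation for $u$, using the Gauss equation to express $R(g_u)$ through $H_u$ and the intrinsic curvature of $g_r$, integrating by parts, and using $\overline\nabla^2V=k^2V\,\bar g$ to cancel the borderline terms, one completes the square and obtains $\tfrac{d}{dr}\mathfrak m_V(r)\le 0$; it is precisely the positivity of $V$, with a margin of slack (which is where the need for $\alpha>1$ comes in), that makes the remaining quadratic term non-positive. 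Hence $\int_\Sigma V(H_0-H)\,d\Sigma\ge c\,p_V$ for every such $V$, with a fixed $c>0$. Carrying out this differentiation and completion of square, and controlling both the $r\to\infty$ limit and the corner at $r=0$, is the main obstacle.

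It remains to assemble the causal vector. Put $a_\mu=\int_\Sigma(H_0-H)V_\mu\,d\Sigma$, with $V_0=t$, $V_i=x_i$ and $\vec a=(a_1,a_2,a_3)$. Applying the monotonicity to $V=t$ gives $a_0\ge c\,p_0\ge 0$; applying it to $V=\alpha t+\langle v,\vec x\rangle$ for each unit vector $v\in\mathbb R^3$ gives $\alpha a_0+\langle v,\vec a\rangle\ge c\big(\alpha p_0+\langle v,\vec p\rangle\big)$. Since $p$ is future non-spacelike, $p_0\ge|\vec p|$, so $\alpha p_0+\langle v,\vec p\rangle\ge(\alpha-1)p_0\ge 0$; letting $v$ range over the unit sphere we conclude $\alpha a_0\ge|\vec a|$ together with $\alpha a_0\ge 0$. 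Therefore $W=(x_1,x_2,x_3,\alpha t)$ is future timelike (indeed $\alpha^2t^2-\sum_ix_i^2=(\alpha^2-1)t^2+k^{-2}>0$ and $\alpha t>0$) and $\int_\Sigma(H_0-H)W\,d\Sigma=(\vec a,\alpha a_0)$ is future non-spacelike, as asserted; the constant $\alpha$ is fixed by the intrinsic geometry of $\Sigma$ alone, through the reference embedding $\widehat\Sigma$ and its equidistant foliation.
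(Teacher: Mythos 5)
Your outline does reproduce the paper's overall architecture: foliate $\mathbb H^3_{-k^2}\setminus\Omega_0$ by equidistant surfaces $\Sigma_\rho$, solve the quasi-spherical parabolic equation \eqref{eq: pde} for $u$ to prescribe scalar curvature $-6k^2$ on $u^2\,d\rho^2+g_\rho$, glue to $\Omega$ along $\Sigma$ (with matching mean curvatures), establish a monotonicity of $\int_{\Sigma_\rho}(H_0-H)\,W\cdot\zeta$ in $\rho$, and close the argument with a spinorial positive-mass-type result (Theorem \ref{thm: PMT} together with Corollary \ref{corollary}) controlling the $\rho\to\infty$ limit. Ranging over null directions to conclude that the vector is future non-spacelike is also the paper's mechanism. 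So the strategy is the same.

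However, there is a genuine gap at the very step you flag as ``the main obstacle,'' and moreover your heuristic for why it works is incorrect. After the computation of Lemma \ref{lem: der} one arrives (with $\phi=\sum y_i\zeta_i$, $W\cdot\zeta=\frac{1}{k}(\phi\sinh kr-\alpha\cosh kr)$) at
\[
B=\tfrac12(H_0^2-|A|^2)(\phi\sinh kr-\alpha\cosh kr)+kH_0\Bigl(\phi\cosh kr\,\tfrac{\partial r}{\partial\rho}+\tfrac1k\sinh kr\,\tfrac{\partial\phi}{\partial\rho}-\alpha\sinh kr\,\tfrac{\partial r}{\partial\rho}\Bigr).
\]
The first term is nonpositive by convexity ($H_0^2-|A|^2\ge0$) and positivity of the static potential — that part of your reasoning is fine. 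But the second term is \emph{not} controlled by positivity of $V$ ``with slack''; for a generic $\alpha>1$ it need not be nonpositive, because it contains the loose term $\tfrac1k\sinh kr\,\tfrac{\partial\phi}{\partial\rho}$ describing the angular drift of the equidistant flow. The paper must prove the sharp geometric estimate of Lemma \ref{lem: phi}: using geodesic convexity of $\Omega_0$ (via do Carmo--Warner) and the enclosing/enclosed geodesic radii $R_1,R_2$, one shows $\tfrac{\partial r}{\partial\rho}\ge\frac{\sinh kR_1}{\sinh kR_2}>0$ and $\bigl|\tfrac{\partial\phi}{\partial\rho}\bigr|\le\mu k\,\tfrac{\partial r}{\partial\rho}$ with $\mu=\tfrac{1}{\sinh kR_1}\bigl(\tfrac{\sinh^2kR_2}{\sinh^2kR_1}-1\bigr)^{1/2}$. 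Only then does the choice $\alpha=\coth kR_1+\mu$ force $(\phi\cosh kr+\mu\sinh kr-\alpha\sinh kr)\tfrac{\partial r}{\partial\rho}\le(\cosh kr-\coth kR_1\sinh kr)\tfrac{\partial r}{\partial\rho}<0$ for $r>R_1$. This estimate is what actually determines $\alpha$ (and is why $\alpha$ depends on $R_1,R_2$ rather than merely being ``any $\alpha>1$''); without it the asserted monotonicity can fail when $\Sigma_0$ is very eccentric. Your write-up attributes the sign to the positivity of $V$ alone, which misses this necessary differential-geometric input; that is the missing idea.

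A secondary inaccuracy: there is no ``completion of square'' in $V$; the nonpositivity of $B$ is proved by handling the two terms separately, and the monotonicity direction comes from the factor $-u^{-1}(u-1)^2\le0$ already isolated in Lemma \ref{lem: der}. Also, the limit at infinity is not quite ``the $V$-component of the ADM mass'' in the sense of a black-box PMT applied to a smoothed glued manifold; the paper proves Theorem \ref{thm: PMT} directly by a Witten-type spinor argument on the Lipschitz glued manifold $\widetilde M$ using the Killing spinor $\phi_0$ (whose norm realises $-2kX\cdot\zeta_a$), and then separately uses Corollary \ref{corollary} to absorb the extra $(\alpha-1)t$-piece of $W\cdot\zeta$. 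These two are morally the same but your phrasing conflates the quasi-local limiting quantity with the asymptotic mass invariant without justifying the identification.
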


In this paper, we will prove the analogous result in higher dimensions for spin manifolds (note that three dimensional orientable manifolds are spin). More precisely we will prove the following
\begin{theorem}(cf. Theorem \ref{thm: m<0})\label{thm: 2}
Let $n\geq 3$ and $(\Omega, g)$ be a compact spin $n$-manifold with smooth boundary $\Sigma$. Assuming the following conditions:
\begin{enumerate}
\item
The scalar curvature $R$ of $(\Omega, g)$ satisfies $R\ge-n(n-1) k^2$ for some $k>0$,
\item
$\Sigma$ is topologically a $(n-1)$-sphere with sectional curvature $K>-k^2$, mean curvature $H>0$ and $\Sigma$ can be isometrically embedded uniquely into $\mathbb{H}_{-k^2}^n$ with mean curvature $H_0$.
\end{enumerate}

Then there is a future  time-like vector-valued function $W$ on $\Sigma$ such that the vector
\begin{equation*}
\int_{\Sigma}(H_0-H) W \,d\Sigma\in \mathbb R^{n,1}
\end{equation*}
is future  non-spacelike.
 Here $W=(x_1, x_2, \cdots, x_n,  \alpha t)$  for some $\alpha>1$ depending only on the intrinsic geometry of $\Sigma$, where $X=(x_1, x_2, \cdots, x_n,  t)\in\mathbb{H}^n_{-k^2}\subset \mathbb{R}^{n, 1}$ is the position vector of the embedding of $\Sigma$.
\end{theorem}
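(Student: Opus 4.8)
The plan is to adapt the Shi--Tam strategy to general dimension: fill in the region ``outside'' $\Sigma$ with a quasi-spherical metric of constant scalar curvature $-n(n-1)k^2$ whose induced boundary metric and boundary mean curvature match those of $\partial\Omega$, glue it to $(\Omega,g)$ along $\Sigma$ to form an asymptotically hyperbolic spin manifold, and then combine a positive mass theorem at infinity with a monotonicity of the relevant $\mathbb R^{n,1}$-valued mass integral along the filling foliation. First I set up the exterior foliation. By the Gauss equation for $\Sigma\subset\mathbb H^n_{-k^2}$, the hypothesis $K>-k^2$ forces the second fundamental form $h$ of the embedded $\Sigma$ to satisfy $h_{ii}h_{jj}-h_{ij}^2>0$ in any principal frame, so that $h$ is definite; since $H_0>0$ it is positive definite, $\Sigma$ is convex, and the outward normal exponential map yields a smooth foliation $\{\Sigma_r\}_{r\ge r_0}$ of the unbounded component of $\mathbb H^n_{-k^2}\setminus\Sigma$ by equidistant hypersurfaces, with induced metrics $\sigma_r$ and positive mean curvatures $\lambda(r)$ whose $r$-evolution is explicit in $\mathbb H^n_{-k^2}$.

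Next I construct the filling. On $N:=\Sigma\times[r_0,\infty)$ consider the metric $\bar g=u^2\,dr^2+\sigma_r$ with $u=u(r,\cdot)>0$ unknown. Imposing $R(\bar g)=-n(n-1)k^2$ gives, from the scalar curvature formula for such a metric, a parabolic equation for $u$ in the ``time'' variable $r$, schematically
\begin{equation*}
2\lambda(r)\,\partial_r u=u^2\,\Delta_{\sigma_r}u+\bigl(\cdots\bigr)u-\bigl(\cdots\bigr)u^3,
\end{equation*}
whose coefficients depend only on the geometry of $\Sigma_r\subset\mathbb H^n_{-k^2}$. I prescribe $u|_{r=r_0}=H_0/H$ (positive, since $H,H_0>0$); then the mean curvature of $\Sigma$ computed in $(N,\bar g)$ is $\lambda(r_0)/u|_{r_0}=H_0/(H_0/H)=H$. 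Parabolic existence theory yields a global positive solution, and a barrier/ODE-comparison argument based on the large-$r$ asymptotics of $\sigma_r$ and $\lambda(r)$ (the $\Sigma_r$ round out to large geodesic spheres of $\mathbb H^n_{-k^2}$) shows $u\to1$ with enough decay that $(N,\bar g)$ is asymptotically hyperbolic, with a well-defined Wang--Yau-type mass vector in $\mathbb R^{n,1}$.

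Now I glue: the manifold $\widehat M:=\Omega\cup_\Sigma N$ carries a metric that is Lipschitz across $\Sigma$ and smooth away from it, with matching induced metric and matching mean curvature along $\Sigma$; since $\Omega$ is spin and $\Sigma\cong S^{n-1}$, $\widehat M$ is spin. On $N$ the scalar curvature equals $-n(n-1)k^2$ and on $\Omega$ it is $\ge-n(n-1)k^2$, while the matching of mean curvatures means the corner contributes no unfavorable term (one may first smooth $\bar g$ near $\Sigma$, keeping $R\ge-n(n-1)k^2-\varepsilon$, as in \cite{ShiTam2007}). I then invoke the higher-dimensional spinorial positive mass theorem for asymptotically hyperbolic spin manifolds with $R\ge-n(n-1)k^2$ --- the general-dimension analog of the Wang--Yau result, proved via a Witten-type Weitzenb\"ock argument using an imaginary Killing spinor asymptotic to a suitable parallel spinor, the jump across $\Sigma$ in the Dirac integration by parts being proportional to the vanishing mean-curvature difference --- to conclude that the mass vector of $(N,\bar g)$ at infinity is future non-spacelike.

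Finally, for $r\ge r_0$ set
\begin{equation*}
\mathbf m(r):=\int_{\Sigma_r}\Bigl(\lambda(r)-\frac{\lambda(r)}{u}\Bigr)\,\widehat W_r\,d\sigma_r\in\mathbb R^{n,1},
\end{equation*}
where $\widehat W_r$ is the restriction to $\Sigma_r$ of the $\mathbb R^{n,1}$-valued coordinate map $(x_1,\dots,x_n,t)$ with its time slot up-weighted; note $\mathbf m(r_0)=\int_\Sigma(H_0-H)\,W\,d\Sigma$ with $W=(x_1,\dots,x_n,\alpha t)$, and $\mathbf m(r)$ converges as $r\to\infty$ to a positive multiple of the mass vector of $(N,\bar g)$. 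Differentiating $\mathbf m$ in $r$, substituting the parabolic equation for $u$ and the explicit $r$-evolution of $\sigma_r,\lambda(r)$, and integrating by parts on $\Sigma_r$, I expect to show that $\mathbf m(r_0)$ differs from $\lim_{r\to\infty}\mathbf m(r)$ by a future causal vector, obtained by integrating a pointwise future causal variation along the foliation, but only once the time slot of $\widehat W_r$ is scaled by a constant $\alpha>1$ depending on the maximum over $\Sigma$ of an intrinsic curvature quantity of $\Sigma$; this is exactly where $\alpha$ enters. Since a sum of two future non-spacelike vectors is future non-spacelike, it follows from the previous step that $\mathbf m(r_0)=\int_\Sigma(H_0-H)W\,d\Sigma$ is future non-spacelike, as claimed. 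I expect the main obstacle to be the combination of the filling step and this monotonicity: establishing the precise asymptotically hyperbolic decay of the quasi-spherical solution $u$ in arbitrary dimension (so that the mass vector is defined and the spinor argument applies), and extracting from $\frac{d}{dr}\mathbf m(r)$ both the correct causal sign and the explicit weight $\alpha>1$ that propagates the causal character inward along the foliation.
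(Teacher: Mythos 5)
Your proposal follows essentially the same strategy as the paper: foliate $\mathbb{H}^n_{-k^2}$ outside the convex image $\Sigma_0$ by equidistant hypersurfaces, solve the Wang--Yau/Shi--Tam parabolic equation for $u$ to produce a quasi-spherical metric $g''=u^2\,d\rho^2+g_\rho$ with scalar curvature $-n(n-1)k^2$ and matching mean curvature at $\Sigma$, establish a spinorial positive mass theorem at infinity for the glued manifold, and combine it with a monotonicity of the $\mathbb{R}^{n,1}$-valued quantity $\int_{\Sigma_\rho}(H_0-H)W\cdot\zeta$ along the foliation. That is exactly the architecture of the paper's proof of Theorem 3.1 (via Lemma 1.5, Lemma 3.2, and Theorem 2.1).

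However, there are two places where what you describe as routine is, in the paper, substantial new content, and where your outline leaves a genuine gap. First, the higher-dimensional asymptotically hyperbolic positive mass theorem (Theorem 2.1 here) is \emph{not} an off-the-shelf analogue of Wang--Yau: its proof in general $n$ requires two new algebraic facts about imaginary Killing spinors on $\mathbb{H}^n_{-k^2}$, namely (i) the identity $|\phi'_{a,0}|^2_{g'}=-2kX\cdot\zeta_a$ expressing the squared norm of a Killing spinor as a Lorentz pairing with a null vector $\zeta_a$ (Proposition 2.2), and (ii) the surjectivity statement that \emph{every} future null $\zeta\in\mathbb{R}^{n,1}$ arises as $\zeta_a$ for some $a\in\mathbb{C}^{2^m}$ (Proposition 2.3), which the paper proves by an induction on dimension using the explicit Clifford matrices of Baum. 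Without (ii), one cannot run the Witten argument against an arbitrary future null direction, so the conclusion that the limit vector is future non-spacelike cannot be drawn; your sketch simply invokes ``an imaginary Killing spinor asymptotic to a suitable parallel spinor'' without addressing this. Second, the monotonicity step that forces $\alpha>1$ is left as an expectation. In the paper this is Lemma 3.2: one must show $\bigl|\partial\phi/\partial\rho\bigr|\le \mu k\,\partial r/\partial\rho$ with $\mu=\sinh^{-1}(kR_1)\bigl(\sinh^2 kR_2/\sinh^2 kR_1-1\bigr)^{1/2}$, and $\partial r/\partial\rho\ge \sinh kR_1/\sinh kR_2$, via an explicit hyperbolic sine/cosine-law argument using convexity of $\Sigma_0$ (do Carmo--Warner) and the enclosing/enclosed geodesic balls $B_0(R_1)\subset\Omega_0\subset B_0(R_2)$; the weight $\alpha=\coth kR_1+\mu$ is then read off from requiring $B\le 0$ in equation (3.6). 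Note that $\alpha$ depends on $R_1,R_2$, i.e.\ on the extrinsic position of $\Sigma_0$ in $\mathbb{H}^n_{-k^2}$ (intrinsic only via the uniqueness of the embedding), not on ``the maximum of an intrinsic curvature quantity'' as you suggest. Finally, a small remark: the paper does not smooth the Lipschitz metric across $\Sigma$ — it exploits the exact cancellation of the boundary terms in the two Lichnerowicz identities on $\Omega$ and on the exterior region, using the matching mean curvature directly; your proposal acknowledges this alternative but presents smoothing as the default.
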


The value of $\alpha $ in Theorem \ref{thm: 1} and \ref{thm: 2} will be given in Theorem \ref{thm: m<0}. To prove Theorem \ref{thm: 2}, there are two main ingredients. One is
the monotonicity of the mass integral under certain flow of the embedded surface in $\mathbb H^n_{-k^2}$, and the other is a positive mass type theorem (Theorem \ref{thm: PMT}). This positive mass theorem was originally proved by Wang and Yau \cite{WY} in the three dimensional case. Here we will give a proof in general dimension. In particular, the existence of the Killing spinor fields plays an important role in the proof. The crucial observations for the proof of the positive mass theorem are some identities involving Killing spinors on the hyperbolic space (Proposition \ref{prop: null}, \ref{prop: spinor}).

Let us also mention that it was conjectured (cf. \cite{ShiTam2007}) that the value of $\alpha$ in Theorem \ref{thm: 1} can be taken to be $1$. Indeed, it was proved by Tam and the author \cite{kwong2010limit} that in the three-dimensional case, if we define the quasilocal mass integral of $\Sigma$ to be
$
\int_\Sigma (H_0-H) X
$, then it has the desired limiting behavior, in the sense that
 this mass integral, when evaluated on coordinate spheres, will tend to the mass (cf. \cite{Wang}) of an asymptotically hyperbolic manifold (under suitable assumptions and normalization). It is also natural to ask if $\alpha$ in Theroem \ref{thm: 2} can be taken to be $1$.

This paper is organized as follows. In Section \ref{sec: prelim}, we will first state and prove some preliminary results, most of which are similar to those in \cite{ShiTam2007} and \cite{WY}. In Section \ref{sec: pmt}, we will state and prove a positive mass type theorem in general dimension. In particular, we will derive some results about Kiling spinors and the Dirac operator in this section for later use. In Section \ref{sec: thm}, we will give the proof of our main result.

{\sc Acknowledgments}:
The contents of this paper forms part the Ph.D. thesis of the author. He would like to give his sincere thanks to his supervisor Luen-Fai Tam for his constant encouragement and very careful guidance.
He would also like to thank Yuguang Shi for useful comments. Finally he would like to thank his wife Candy for her patience and love.

\section{Preliminaries}\label{sec: prelim}
In this section, we will state and prove some preliminary results which are similar to those in \cite{ShiTam2007} and \cite{WY}. The setup is as follows.

Let $(\Omega, g)$ be a compact $n$-dimensional manifold with smooth
boundary $\Sigma=\partial \Omega$, homeomorphic to a $(n-1)$-sphere. Suppose
the scalar curvature $R$ of $\Omega$ satisfies $R\geq -n(n-1)k^2$
for some $k>0$. Let $H$ be the mean curvature of $\Sigma$ with
respect to the outward normal. We assume $H$ is positive, the
sectional curvature of $\Sigma$ is greater than $-k^2$ and $\Sigma$
can be isometrically embedded uniquely into $\mathbb{H}_{-k^2}^n$, the
hyperbolic space of constant sectional curvature $-k^2$.
 We use the following hyperboloid model  for $\mathbb{H}_{-k^2}^n$:
\begin{equation}\label{eq: hyperb}
\mathbb{H}_{-k^2}^n =\left\{(x_1, \cdots, x_n, t)\in \mathbb R^{n, 1}\mid \sum_{i=1}^n x_i ^2-t^2=-\frac{1}{k^2}, t>0 \right\}
\end{equation}
where $\mathbb R^{n, 1}$ is the Minkowski space  with Lorentz metric $\displaystyle\sum_{i=1}^ndx_i^2-dt^2$. The position vector of $\mathbb{H}_{-k^2}^n$ in $\mathbb R^{n, 1}$ can be parametrized by
\begin{equation}\label{eq: spherical}
X=(x_1, \cdots, x_n, t)=\frac{1}{k}(\sinh (kr) Y, \cosh kr)
\end{equation}
where $Y\in \mathbb{S}^{n-1}$, the unit sphere in $\mathbb R^n$. Note that $r$ is the geodesic distance of a point from $o=(0, \cdots, 0, 1/k)\in \mathbb{H}_{-k^2}^n$. Without loss of generality  we can assume that $\Sigma_0$, the embedded image of $\Sigma$, encloses a region $\Omega_0$ which contains $o$.

Let $\Sigma _\rho$  be the level surface outside $\Sigma_0$ in $\mathbb{H}_{-k^2}^n$ with distance $\rho$ from $\Sigma_0$. Suppose $F: \Sigma \rightarrow \mathbb{H}_{-k^2}^n$ is the embedding with unit outward normal $N$, then $\Sigma_\rho$ as a subset of $\mathbb R^{n, 1}$ is given by (\cite{ShiTam2007} Equation (2.2))
\begin{equation}\label{eq: X rho}
X(p, \rho)=\cosh (k\rho) X(p, 0)+\frac{1}{k}\sinh (k \rho) N(p, 0).
\end{equation}
Here for simplicity, $(p, \rho)$ denotes a point $\Sigma_\rho$ which
lies on the geodesic perpendicular to $\Sigma_0$ starting from the
point $p\in \Sigma _0$ and $X(p, 0)=X(F(p))$.

On $\mathbb{H}^n_{-k^2}\setminus \Omega_0$, the hyperbolic metric can be written as
\begin{equation}\label{eq: g'}
g'=d\rho^2+g_\rho,
\end{equation}
where $g_\rho$ is the induced metric on $\Sigma_\rho$. As in \cite{WY}, we
can perturb the metric to form a new metric on $\mathbb{H}^n_{-k^2}\setminus \Omega_0 $
\begin{equation}\label{eq: g''}
g''=u^2d\rho^2+g_\rho
\end{equation}
(note that the induced metrics from $g'$ and $g''$
on $\Sigma_\rho$ are the same) with prescribed scalar curvature
$-n(n-1)k^2$, where $u$ satisfies (\cite{WY} Equation 2.10):
\begin{equation}\label{eq: pde}
\left\{
\begin{split}
2H_0 \frac{\partial u}{\partial\rho}&=2u^2\Delta _\rho u +(u-u^3)(R^\rho+n(n-1)k^2),\\
u(p, 0)&=\frac{H_0(p, 0)}{H(p)}.
\end{split}
\right.
\end{equation}
Here $\Delta _\rho$ is the Laplacian on $\Sigma_\rho$, $R^\rho$ is
the scalar curvature of $\Sigma_\rho$, $H_0(p, \rho)$ is the mean
curvature of $\Sigma_\rho$ in $(\mathbb{H}_{-k^2}^n, g')$ and $H(p)$
is the mean curvature of $\partial \Omega$ in $(\Omega, g)$. The
mean curvature of $\Sigma_\rho$ with respect to the new metric $g''$ is then
\begin{equation}\label{eq: H}
H(p, \rho)=\frac{H_0(p, \rho)}{u(p, \rho)}.
\end{equation}

We have the following estimates:
\begin{lemma}[cf. \cite{WY} p. 255-257]\label{lem: est}
\begin{enumerate}
\item\label{item: 1}
For all $\rho$, $e^{-2k \rho}g_\rho$ is uniformly equivalent to the standard metric on $\mathbb{S}^{n-1}$. Indeed, we can choose a coordinates around any $p\in\Sigma$ such that
$g_{ab}(p, \rho)=f\delta_{ab}$, where $f=\frac{\sinh^2(k (\mu_a+\rho))}{\sinh^2(k \mu_a)}, e^{2k \rho} $ or $\frac{\cosh^2(k (\mu_a+\rho))}{\cosh^2(k \mu_a)}$ and $\lambda_a(p,0)=k \coth (k \mu_a), k$ or $k \tanh (k \mu_a)$ is the initial principal curvature with respect to $g'$.
\item
Let $d\Sigma_\rho$ denotes the volume element of $\Sigma_\rho$, then $e^{-(n-1)k\rho}d\Sigma_\rho$ is uniformly equivalent to the volume element $d\mathbb{S}^{n-1}$ of $\mathbb{S}^{n-1}.$
\item
The principal curvatures of $\Sigma_\rho$ with respect to $g'$ is of order $\lambda_a(p, \rho)=k(1+O(e^{-2k\rho}))$, and therefore $H_0=(n-1)k+O(e^{-2k\rho})$.
\item
$|u-1|\leq C e^{-nk\rho}$ for some $C>0$ independent of $\rho$.
\end{enumerate}
\end{lemma}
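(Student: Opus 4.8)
The plan is to prove the four items in order: (1)--(3) follow from the explicit geometry of equidistant hypersurfaces in $\mathbb H^n_{-k^2}$, and (4) — the genuinely delicate point — from a maximum principle applied to the quasi-spherical equation \eqref{eq: pde}. For (1), I would first observe that the hypotheses on $\Sigma_0$ force all its principal curvatures $\lambda_a(p,0)$ to be positive: since $n\geq 3$, the Gauss equation gives $K(e_a,e_b)=-k^2+\lambda_a\lambda_b$ in a principal frame, so $K>-k^2$ yields $\lambda_a\lambda_b>0$ for $a\neq b$, and $H=\sum_a\lambda_a>0$ then forces each $\lambda_a>0$; hence $\Sigma_0$ is convex, its outward normal exponential map has no focal points, the $\Sigma_\rho$ foliate the exterior, and $g'=d\rho^2+g_\rho$. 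Along each normal geodesic the shape operator obeys the radial Riccati equation of constant curvature $-k^2$, so each principal curvature solves $\partial_\rho\lambda_a=k^2-\lambda_a^2$; writing the initial value as $k\coth(k\mu_a)$, $k$, or $k\tanh(k\mu_a)$ according to whether it exceeds, equals, or is less than $k$ (with $\mu_a>0$ in the non-constant cases, by positivity), one gets $\lambda_a(p,\rho)$ equal to $k\coth(k(\mu_a+\rho))$, $k$, or $k\tanh(k(\mu_a+\rho))$. Choosing coordinates on $\Sigma_0$ near $p$ that are $g_0$-orthonormal and diagonalize $A_0$ at $p$ and pushing them forward along the flow, the evolution $\partial_\rho g_\rho=2\,\mathrm{II}_\rho$ (equivalently, the Jacobi fields $J_a(\rho)=\big(\cosh k\rho+k^{-1}\lambda_a(p,0)\sinh k\rho\big)E_a(\rho)$) gives $g_{ab}(p,\rho)=f_a(\rho)\delta_{ab}$ with $f_a(\rho)=\exp\!\big(2\int_0^\rho\lambda_a(p,s)\,ds\big)$, which is exactly the three displayed formulas. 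Uniform equivalence of $e^{-2k\rho}g_\rho$ to the round metric then follows because the eigenvalues $e^{-2k\rho}f_a(\rho)$ of $e^{-2k\rho}g_\rho$ relative to $g_0$ are monotone in $\rho$ with endpoint values pinched between positive constants uniformly in $p$ (by compactness of $\Sigma$, which keeps the $\mu_a$ in a compact subset of $(0,\infty)$), while $g_0$ is itself a fixed smooth metric on $S^{n-1}$.

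Item (2) is then immediate: under the flow diffeomorphism $d\Sigma_\rho=\big(\prod_a\sqrt{f_a(\rho)}\,\big)d\Sigma_0$, so $e^{-(n-1)k\rho}d\Sigma_\rho=\prod_a\sqrt{e^{-2k\rho}f_a(\rho)}\;d\Sigma_0$ with each factor pinched by (1). For (3), expanding $k\coth(kw)=k(1+2e^{-2kw}+\cdots)$ and $k\tanh(kw)=k(1-2e^{-2kw}+\cdots)$ with $w=\mu_a+\rho\geq\mu_{\min}>0$ gives $\lambda_a(p,\rho)=k+O(e^{-2k\rho})$ uniformly, and summing over $a$ gives $H_0=(n-1)k+O(e^{-2k\rho})$. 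Along the way I would also record, from the Gauss equation $R^\rho=-(n-1)(n-2)k^2+H_0^2-\sum_a\lambda_a^2$ together with (1) and (3), that $R^\rho=O(e^{-2k\rho})$, and — since every $\lambda_a(p,\rho)>0$ — that $R^\rho+n(n-1)k^2>2(n-1)k^2>0$ for all $\rho\geq 0$; both are needed below.

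For (4) I would rewrite \eqref{eq: pde} as $\partial_\rho u=\frac{u^2}{H_0}\Delta_\rho u+\frac{u-u^3}{2H_0}\big(R^\rho+n(n-1)k^2\big)$, which is parabolic since $H_0>0$, and argue in two steps. First, since $u(p,0)=H_0(p,0)/H(p)$ lies in a fixed interval $[c_1,c_2]\subset(0,\infty)$, the constant sub/supersolutions $\min(c_1,1)$, $\max(c_2,1)$ and the parabolic maximum principle confine $u$ to a fixed compact subinterval of $(0,\infty)$; then, using that $\frac{R^\rho+n(n-1)k^2}{2H_0}$ is bounded between positive constants while $u-u^3$ changes sign at $u=1$, the functions $m(\rho)=\max_{\Sigma_\rho}u$ and $\mu(\rho)=\min_{\Sigma_\rho}u$ satisfy $m'\leq c(m-m^3)$ for $m\geq1$ and $\mu'\geq c(\mu-\mu^3)$ for $\mu\leq1$, forcing $u\to1$ uniformly.

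Second, once $|u-1|$ is small, setting $v=u-1$ the reaction term linearizes as $\frac{u-u^3}{2H_0}(R^\rho+n(n-1)k^2)=-\frac{R^\rho+n(n-1)k^2}{H_0}v+O(v^2)=-\big(nk+O(e^{-2k\rho})\big)v+O(v^2)$ by (1) and (3), so the maximum principle gives $\Phi'\leq-\big(nk-Ce^{-2k\rho}-C\Phi\big)\Phi$ for $\Phi(\rho)=\max_{\Sigma_\rho}|u-1|$; a first integration yields $\Phi=O(e^{-(nk-\varepsilon)\rho})$ for all $\varepsilon>0$, and choosing $\varepsilon<(n-2)k$ (so $nk-\varepsilon>2k$) makes $C\Phi$ subordinate to $e^{-2k\rho}$, whence a second integration of $\Phi'\leq-(nk-C'e^{-2k\rho})\Phi$ gives the sharp $\max_{\Sigma_\rho}|u-1|\leq Ce^{-nk\rho}$. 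The real work — where I expect to follow \cite{WY} closely — is in making the differential inequalities for $\max/\min$ rigorous (a priori the extrema need not occur at smooth points) and in the bootstrap from rate $nk-\varepsilon$ to the exact $nk$ against the slower $O(e^{-2k\rho})$ errors.
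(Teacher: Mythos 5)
Your proof is correct and follows the same strategy as Wang--Yau \cite{WY} (pp.~255--257), which is exactly what the paper cites: the paper itself omits the proof of this lemma, remarking only that it is ``exactly the same as in [WY] except some obvious modification.'' Your Riccati/Jacobi-field computation for items (1)--(3), together with the two-step maximum-principle bootstrap for item (4) (first confine $u$ to a compact subinterval of $(0,\infty)$ using the sign of $u-u^3$, then linearize near $u=1$ and iterate the differential inequality to promote the rate $nk-\varepsilon$ to $nk$ against the $O(e^{-2k\rho})$ errors), is precisely that argument adapted to general $n$ and $k$.
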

We also have the following long time existence result:
\begin{proposition}[cf. \cite{WY} Theorem 2.1]\label{prop: u}
\begin{enumerate}
\item
The solution $u$ of \eqref{eq: pde} exists for all time and $v=\displaystyle\lim_{\rho\rightarrow \infty} e^{nk\rho} (u-1)$ exists as a smooth function on $\Sigma$.
\item
$g''=u^2d\rho^2+ g_\rho$ is asymptotically hyperbolic \cite{andersson1998scalar} on $M=\mathbb{H}^n_{-k^2}\setminus \Omega_0$ with scalar curvature $-n(n-1)k^2$.
\item
Let $A:(TM, g')\rightarrow(TM, g'')$ be the Gauge transformation defined by $A \frac{\partial}{\partial \rho}=\frac{1}{u}\frac{\partial}{\partial\rho}$ and $AV=V$ for any vector $V\in T\Sigma_\rho$, then $|A-Id|_{g'}=O(e^{-nk\rho})$ and $|\nabla'A|_{g'}=O(e^{-nk\rho})$.
\end{enumerate}
\end{proposition}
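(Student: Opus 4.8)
The plan is to follow Wang and Yau's proof of their Theorem~2.1 \cite{WY} and run it in general dimension $n\geq 3$; the estimates of Lemma~\ref{lem: est} hold in a dimension-independent form, so the only things to check are that the structure of the equation, the maximum-principle argument, and the parabolic regularity theory all survive. Pulling back to the fixed manifold $\Sigma$ via $p\mapsto(p,\rho)$, equation \eqref{eq: pde} becomes the quasilinear equation
\[
\frac{\partial u}{\partial\rho}=\frac{u^2}{H_0}\,\Delta_\rho u+\frac{(u-u^3)\bigl(R^\rho+n(n-1)k^2\bigr)}{2H_0}
\]
on $\Sigma\times[0,\infty)$, which is parabolic in the ``time'' variable $\rho$ as long as $u$ stays positive and bounded, with smooth coefficients that behave well by Lemma~\ref{lem: est}; short-time existence is standard. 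The key point for the a priori bounds is the positivity
\[
R^\rho+n(n-1)k^2=2(n-1)k^2+\sum_{a\neq b}\lambda_a\lambda_b>0\qquad\text{for all }\rho\geq 0,
\]
obtained from the Gauss equation together with the positivity of the principal curvatures $\lambda_a(p,\rho)$ of $\Sigma_\rho$ in $\mathbb{H}^n_{-k^2}$ recorded in Lemma~\ref{lem: est}(1) (which in turn rests on the hypotheses that $\Sigma$ have sectional curvature $>-k^2$ and $H>0$). Since $u(p,0)=H_0/H>0$, evaluating the equation at a spatial maximum, respectively minimum, of $u$ shows that $\max_\Sigma u(\cdot,\rho)$ is nonincreasing whenever it is $\geq 1$ and $\min_\Sigma u(\cdot,\rho)$ is nondecreasing whenever it is $\leq 1$; hence $u$ stays between two positive constants. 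Uniform parabolicity, the continuation principle, and Schauder estimates then yield a smooth solution for all $\rho$.

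\emph{Asymptotics.} Write $w=u-1$. By Lemma~\ref{lem: est} one has $H_0=(n-1)k+O(e^{-2k\rho})$, $R^\rho+n(n-1)k^2=n(n-1)k^2+O(e^{-2k\rho})$, and $\Delta_\rho=O(e^{-2k\rho})$ relative to a fixed Laplacian, so the equation takes the schematic form $\partial_\rho w=-nk\,w+E$, where $E$ collects an $O(e^{-2k\rho})$ multiple of the second $\Sigma$-derivatives of $w$, an $O(e^{-2k\rho})|w|$ term, and an $O(|w|^2)$ term; this already forces $|u-1|=O(e^{-nk\rho})$, which is Lemma~\ref{lem: est}(4). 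Set $\tilde u=e^{nk\rho}w$. The uniform $C^0$ bound on $\tilde u$, together with interior parabolic (Schauder) estimates on unit $\rho$-intervals, gives uniform $C^m$ bounds on $\tilde u(\cdot,\rho)$ for every $m$; substituting these into the equation for $\tilde u$, the factor $e^{nk\rho}$ is cancelled exactly by the $-nk\,w$ term while every remaining term carries extra decay, so $\partial_\rho\tilde u=O(e^{-2k\rho})$ in $C^0$. Hence $\tilde u(p,\rho)$ is Cauchy in $\rho$ and converges uniformly to a function $v$ on $\Sigma$, and the uniform $C^m$ bounds with Arzel\`a--Ascoli upgrade this to $C^\infty$ convergence, so $v$ is smooth: this proves (1).

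\emph{Parts (2) and (3).} Equation \eqref{eq: pde} was derived precisely so that $g''=u^2d\rho^2+g_\rho$ has scalar curvature $-n(n-1)k^2$, and since $g'=d\rho^2+g_\rho$ is the hyperbolic metric while $g''-g'=(u^2-1)\,d\rho^2$ with $|u^2-1|=O(e^{-nk\rho})$ (and the derivatives of $u-1$ decaying at the same rate by (1)), $g''$ satisfies the decay and integrability requirements of an asymptotically hyperbolic manifold in the sense of \cite{andersson1998scalar}, giving (2). For (3), $A-\mathrm{Id}$ acts only on the $\partial_\rho$ factor, $(A-\mathrm{Id})\partial_\rho=(u^{-1}-1)\partial_\rho$, so $|A-\mathrm{Id}|_{g'}=|u^{-1}-1|=O(e^{-nk\rho})$; and $\nabla'A$ is controlled by $|\nabla'u|_{g'}$ and by $|u^{-1}-1|$ times the second fundamental form of $\Sigma_\rho$ in $(M,g')$, the latter bounded since $\lambda_a=k+O(e^{-2k\rho})$, while from the equation $\partial_\rho u=O(e^{-nk\rho})$ with even smaller tangential derivatives, so $|\nabla'A|_{g'}=O(e^{-nk\rho})$. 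The one genuinely delicate point in all of this is the asymptotic analysis in the second step—extracting the sharp rate $e^{-nk\rho}$ and showing that $e^{nk\rho}(u-1)$ converges, together with all its $\Sigma$-derivatives, to a smooth limit; this is where the uniform estimates of Lemma~\ref{lem: est} and interior parabolic regularity do the real work, while everything else reduces to a uniformly parabolic Cauchy problem and bookkeeping of decay rates.
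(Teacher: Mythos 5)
Your proposal is correct and follows the Wang--Yau Theorem~2.1 argument that the paper itself cites and declines to reproduce (``Since the proofs of the above two results are exactly the same as in [WY] except some obvious modification, we omit them here''). The dimension-dependent checks you isolate are the right ones: the positivity $R^\rho+n(n-1)k^2=2(n-1)k^2+\sum_{a\neq b}\lambda_a\lambda_b>0$ from the Gauss equation and convexity of $\Sigma_\rho$, the max/min principle confining $u$ between positive constants, and the linearization $u-u^3\sim -2(u-1)$ against $H_0\to(n-1)k$ and $R^\rho+n(n-1)k^2\to n(n-1)k^2$ producing the decay coefficient $-nk$.
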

Since the proofs of the above two results are exactly the same as in \cite{WY} except some obvious modification, we omit them here.
\begin{lemma}(cf. \cite{ShiTam2007} Lemma 3.4)\label{lem: X}
On $\mathbb{H}^n_{-k^2}\setminus \Omega_0$,
\begin{equation*}
H_0 \frac{\partial X}{\partial\rho}+\Delta_\rho X-(n-1)k^2 X=0.
\end{equation*}
\end{lemma}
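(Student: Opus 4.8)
The plan is to split the identity into two elementary ingredients about the position vector $X$ and then combine them. \emph{Step 1.} I would first show that, viewed as an $\mathbb R^{n,1}$-valued function on $\mathbb{H}^n_{-k^2}$, the position vector satisfies $\Delta_{\mathbb{H}}X=nk^2X$, where $\Delta_{\mathbb{H}}$ is the Laplacian of $g'$ applied to each of the $n+1$ components. The cleanest argument treats $\mathbb{H}^n_{-k^2}\subset\mathbb R^{n,1}$ as a totally umbilic hypersurface: since $\langle X,X\rangle=-1/k^2$ and $\bar\nabla_V X=V$ for the flat connection $\bar\nabla$ of $\mathbb R^{n,1}$ (directional derivative of the position vector), differentiating $\langle X,X\rangle=-1/k^2$ twice shows that the normal-valued second fundamental form of $\mathbb{H}^n_{-k^2}$ is $\mathrm{II}(V,W)=k^2 g'(V,W)\,X$; then for a $g'$-orthonormal frame $\{e_a\}$ the Gauss formula gives $\Delta_{\mathbb{H}}X=\sum_a\big(\bar\nabla_{e_a}e_a-\nabla_{e_a}e_a\big)=\sum_a\mathrm{II}(e_a,e_a)=nk^2X$. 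Equivalently, one can read this off directly from the polar parametrization \eqref{eq: spherical} using $\Delta_{\mathbb{H}}=\partial_r^2+(n-1)k\coth(kr)\,\partial_r+\frac{k^2}{\sinh^2(kr)}\Delta_{\mathbb S^{n-1}}$.

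\emph{Step 2.} I would then invoke the standard splitting of the Laplacian adapted to the foliation $g'=d\rho^2+g_\rho$: for any scalar function $f$ on $\mathbb{H}^n_{-k^2}\setminus\Omega_0$,
\[
\Delta_{\mathbb{H}}f=\frac{\partial^2 f}{\partial\rho^2}+H_0\,\frac{\partial f}{\partial\rho}+\Delta_\rho f,
\]
because $\sqrt{\det g'}=\sqrt{\det g_\rho}$ and $\partial_\rho\log\sqrt{\det g_\rho}=\operatorname{tr}_{g_\rho}\!\big(\tfrac12\partial_\rho g_\rho\big)$ is precisely the mean curvature $H_0(p,\rho)$ of $\Sigma_\rho$ in $(\mathbb{H}^n_{-k^2},g')$ with respect to the outward normal $\partial_\rho$. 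This may be applied componentwise to $X$.

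\emph{Step 3.} Finally I would compute $\partial_\rho^2 X$ from the explicit formula \eqref{eq: X rho}: since $X(p,\rho)=\cosh(k\rho)X(p,0)+\tfrac1k\sinh(k\rho)N(p,0)$ with $X(p,0),N(p,0)$ fixed in $\mathbb R^{n,1}$, differentiating twice in $\rho$ gives $\partial_\rho^2X=k^2\big(\cosh(k\rho)X(p,0)+\tfrac1k\sinh(k\rho)N(p,0)\big)=k^2X$ (equivalently, the $\rho$-curves are unit-speed geodesics of $\mathbb{H}^n_{-k^2}$, so their $\mathbb R^{n,1}$-acceleration is normal to $\mathbb{H}^n_{-k^2}$, hence a multiple of $X$, and the constant is $k^2$). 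Substituting $\Delta_{\mathbb{H}}X=nk^2X$ and $\partial_\rho^2X=k^2X$ into the splitting of Step~2 yields $nk^2X=k^2X+H_0\,\partial_\rho X+\Delta_\rho X$, which rearranges to $H_0\,\partial_\rho X+\Delta_\rho X-(n-1)k^2X=0$.

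I do not expect a genuine obstacle here: the content is really Step~1 (the fact that the ambient coordinate functions restrict to eigenfunctions of $\Delta_{\mathbb{H}}$ with eigenvalue $nk^2$), and Steps~2--3 are routine. The only thing to be careful about is the bookkeeping of signs and normalizations --- in particular confirming that the ``$H_0$'' produced by the Laplacian splitting, $\operatorname{tr}_{g_\rho}(\tfrac12\partial_\rho g_\rho)$, really is the function $H_0(p,\rho)$ used in \eqref{eq: pde}--\eqref{eq: H}. This is the higher-dimensional analogue of Lemma~3.4 of \cite{ShiTam2007}.
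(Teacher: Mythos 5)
Your proposal is correct and follows exactly the same route as the paper's proof: establish $\Delta_{\mathbb{H}}X=nk^2X$, decompose $\Delta_{\mathbb{H}}=\partial_\rho^2+H_0\,\partial_\rho+\Delta_\rho$ along the foliation, and read off $\partial_\rho^2X=k^2X$ from \eqref{eq: X rho}. The only difference is that you supply a justification for $\Delta_{\mathbb{H}}X=nk^2X$ (via the umbilic second fundamental form or the polar formula), which the paper simply asserts as easy.
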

\begin{proof}
First of all it is easy to see that $\Delta _{\mathbb{H}_{-k^2}^n}X=nk^2 X$.
On the other hand, under the foliation by $\Sigma_\rho$, the $\Delta_{\mathbb{H}_{-k^2}^n}$ is given by
$\Delta_{\mathbb{H}_{-k^2}^n} =\frac{\partial^2}{\partial \rho^2}+H_0 \frac{\partial }{\partial\rho}+\Delta_\rho$,
where $\Delta _\rho$ is the Laplacian on $\Sigma_\rho$.
So using \eqref{eq: X rho},
$$
nk^2X=\frac{\partial ^2}{\partial\rho^2}X+H_0 \frac{\partial}{\partial\rho}X+\Delta_\rho X
=k^2 X +H_0 \frac{\partial}{\partial\rho}X+\Delta_\rho X.
$$
\end{proof}
 Let $B_0(R_1)$ and $B_0(R_2)$ be geodesic balls in $\mathbb{H}^n_{-k^2}$ such that $B_0(R_1)\subset D\subset B_0(R_2)$. We define
 $W=(x_1, x_2, \cdots, x_n, \alpha t)$ with
$$\alpha=\coth kR_1+\frac{1}{\sinh k R_1}\left(\frac{\sinh^2 kR_2}{\sinh ^2 k R_1}-1 \right)^\frac{1}{2},$$
where $X=(x_1, x_2, \cdots, x_n,  t)$ is the position vector of $\Sigma_\rho$ in $\mathbb R^{n, 1}$.

Clearly we also have
\begin{lemma}\label{lem: W}
On $\mathbb{H}^n_{-k^2}\setminus \Omega_0$,
$
H_0 \frac{\partial W}{\partial\rho}+\Delta_\rho W-(n-1)k^2 W=0.
$
\end{lemma}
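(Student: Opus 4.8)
The plan is simply to leverage Lemma \ref{lem: X} together with the linearity of the operator involved. Write $L := H_0\,\tfrac{\partial}{\partial\rho} + \Delta_\rho - (n-1)k^2$, acting on $\mathbb{R}^{n,1}$-valued functions componentwise; it is $\mathbb{R}$-linear and diagonal with respect to the decomposition of a vector in $\mathbb{R}^{n,1}$ into its $n+1$ coordinate functions. Lemma \ref{lem: X} asserts $LX = 0$ for $X = (x_1,\dots,x_n,t)$, which, read componentwise, means $Lx_i = 0$ for $i=1,\dots,n$ and $Lt = 0$.

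Now observe that $W = (x_1,\dots,x_n,\alpha t)$ has exactly the same first $n$ coordinate functions as $X$, while its last coordinate function is $\alpha t$ with $\alpha$ a genuine constant (independent of $p$ and $\rho$, since it is built only from the fixed geodesic radii $R_1,R_2$). Hence $L(\alpha t) = \alpha\,Lt = 0$ and $Lx_i = 0$, so $LW = 0$, which is the claim.

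If one prefers an independent verification, one can repeat verbatim the computation in the proof of Lemma \ref{lem: X}: each coordinate function on $\mathbb{R}^{n,1}$, restricted to $\mathbb{H}^n_{-k^2}$, is an eigenfunction of $\Delta_{\mathbb{H}^n_{-k^2}}$ with eigenvalue $nk^2$, so $\Delta_{\mathbb{H}^n_{-k^2}} W = nk^2 W$; from \eqref{eq: X rho} one has $\tfrac{\partial^2}{\partial\rho^2}X = k^2 X$, hence $\tfrac{\partial^2}{\partial\rho^2}W = k^2 W$ by the same componentwise reasoning; substituting these into the foliation identity $\Delta_{\mathbb{H}^n_{-k^2}} = \tfrac{\partial^2}{\partial\rho^2} + H_0\tfrac{\partial}{\partial\rho} + \Delta_\rho$ yields the result. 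There is no real obstacle here; the only point to keep straight is that $\alpha$ is a true constant, so it commutes past both $\Delta_\rho$ and $\tfrac{\partial}{\partial\rho}$.
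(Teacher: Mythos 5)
Your proof is correct and is precisely the argument the paper has in mind: the lemma is stated with the preamble ``Clearly we also have,'' and the intended reasoning is exactly your observation that $L = H_0\,\partial_\rho + \Delta_\rho - (n-1)k^2$ acts componentwise, $\alpha$ is a constant, so $LW=0$ follows immediately from $LX=0$ (Lemma~\ref{lem: X}).
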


\begin{lemma}(cf. \cite{ShiTam2007} Equation 3.8)\label{lem: der}
On $\mathbb{H}^n_{-k^2}\setminus \Omega_0$,
\begin{equation*}
\begin{split}
&\frac{d}{d\rho}\left(\int_{\Sigma_\rho}(H_0-H)X d\Sigma_\rho \right )\\
=&-\int_{\Sigma_\rho}u^{-1}(u-1)^2  \left(\left(R^\rho+(n-1)(n-2)k^2 \right)\frac{X}{2} + H_0\frac{\partial X}{\partial\rho}\right) d\Sigma_\rho.
\end{split}
\end{equation*}
\end{lemma}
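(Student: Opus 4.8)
The plan is to differentiate $I(\rho):=\int_{\Sigma_\rho}(H_0-H)X\,d\Sigma_\rho$ directly. Since the $\Sigma_\rho$ are the level sets of the distance function $\rho$ in $(\mathbb{H}^n_{-k^2},g')$, the first variation of area gives $\frac{\partial}{\partial\rho}(d\Sigma_\rho)=H_0\,d\Sigma_\rho$, and by Proposition \ref{prop: u} everything in sight is smooth, so
\[
I'(\rho)=\int_{\Sigma_\rho}\Big[(\partial_\rho H_0-\partial_\rho H)X+(H_0-H)\partial_\rho X+(H_0-H)H_0X\Big]\,d\Sigma_\rho .
\]
The whole proof is then a matter of rewriting the integrand.

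For $\partial_\rho H_0$ I use the Riccati equation along the foliation together with $\mathrm{Ric}_{g'}=-(n-1)k^2 g'$, which gives $\partial_\rho H_0=(n-1)k^2-|A_0|^2$, where $A_0$ is the second fundamental form of $\Sigma_\rho$ in $(\mathbb{H}^n_{-k^2},g')$; combining this with the Gauss equation $R^\rho=H_0^2-|A_0|^2-(n-1)(n-2)k^2$ yields $\partial_\rho H_0=R^\rho+(n-1)^2k^2-H_0^2$. For $\partial_\rho H$ I use $H=H_0/u$ from \eqref{eq: H}, so $\partial_\rho H=u^{-1}\partial_\rho H_0-H_0u^{-2}\partial_\rho u$, and substitute for $H_0\partial_\rho u$ from the quasi-spherical equation \eqref{eq: pde}. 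Feeding this back in,
\[
\partial_\rho H_0-\partial_\rho H=(1-u^{-1})\,\partial_\rho H_0+\Delta_\rho u+\tfrac{1}{2}(u^{-1}-u)\big(R^\rho+n(n-1)k^2\big),
\]
and, since $H_0-H=(1-u^{-1})H_0$ and $(H_0-H)H_0=(1-u^{-1})H_0^2$, the $H_0^2$ terms combine with $(1-u^{-1})\partial_\rho H_0$ so that the dangerous $H_0^2$ dependence disappears: the coefficient of $X$ becomes $(1-u^{-1})(R^\rho+(n-1)^2k^2)+\Delta_\rho u+\tfrac{1}{2}(u^{-1}-u)(R^\rho+n(n-1)k^2)$, and the coefficient of $\partial_\rho X$ is $(1-u^{-1})H_0$.

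It remains to handle the term $\int_{\Sigma_\rho}(\Delta_\rho u)X\,d\Sigma_\rho$. Writing $\Delta_\rho u=\Delta_\rho(u-1)$ and integrating by parts on the closed hypersurface $\Sigma_\rho$ gives $\int_{\Sigma_\rho}(\Delta_\rho u)X\,d\Sigma_\rho=\int_{\Sigma_\rho}(u-1)\,\Delta_\rho X\,d\Sigma_\rho$, and Lemma \ref{lem: X} then replaces $\Delta_\rho X$ by $(n-1)k^2X-H_0\partial_\rho X$. Now the coefficient of $\partial_\rho X$ becomes $(1-u^{-1})H_0-(u-1)H_0=-u^{-1}(u-1)^2H_0$, exactly as claimed; and in the coefficient of $X$, the $R^\rho$-part and the $(n-1)(n-2)k^2$-part each simplify to $-\tfrac{1}{2}u^{-1}(u-1)^2$ times the corresponding factor, producing the stated right-hand side.

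I expect the only genuine subtlety to be this last bookkeeping: the right-hand side takes the indicated clean form only if, before integrating by parts, one uses the freedom to replace $u$ by $u-1$ (equivalently, to add a multiple of $\int_{\Sigma_\rho}\Delta_\rho X\,d\Sigma_\rho=0$); without this one lands on an expression differing from the claimed one by $\int_{\Sigma_\rho}\Delta_\rho X\,d\Sigma_\rho$, which vanishes but obscures the structure. One also needs the Riccati and Gauss equations for the hyperbolic ambient metric in order to trade $|A_0|^2$ for $R^\rho$ and $H_0^2$. As consistency checks, the identity reduces to the trivial $0=0$ when $u\equiv 1$, and for $n=3$ it becomes \cite{ShiTam2007} Equation (3.8) after writing $R^\rho=2K$.
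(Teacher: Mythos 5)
Your proof is correct and follows essentially the same route as the paper's: differentiate the integral using the first variation of area, substitute the quasi-spherical equation for $\partial_\rho u$, trade $\partial_\rho H_0$ for $R^\rho$ via the Riccati and Gauss equations, integrate $\Delta_\rho u$ by parts, and invoke Lemma \ref{lem: X}. The only cosmetic difference is that you keep $H_0-H$ intact and expand its derivative, while the paper first rewrites $H_0-H=H_0(1-u^{-1})$ before differentiating; the term-by-term bookkeeping you describe (including the $u\mapsto u-1$ normalization, which only shifts the answer by the vanishing quantity $\int_{\Sigma_\rho}\Delta_\rho X\,d\Sigma_\rho$) matches the paper's computation exactly.
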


\begin{proof}
By \eqref{eq: pde} and the divergence theorem,
\begin{equation}\label{eq: d}
\begin{split}
&\frac{d}{d\rho}\left (\int_{\Sigma_\rho}(H_0-H)X d\Sigma_\rho \right)\\
=&\frac{d}{d\rho}\left (\int_{\Sigma_\rho}H_0(1-u^{-1})X d\Sigma_\rho \right) \\
=&\int_{\Sigma_\rho} (\frac{\partial H_0}{\partial\rho} (1-u^{-1})X +
H_0 u^{-2}\frac{\partial u}{\partial \rho} X +
H_0 (1-u^{-1})\frac{\partial X}{\partial\rho} \\
&+H_0^2 (1-u^{-1})X  ) d\Sigma_\rho\\
=&\int_{\Sigma_\rho} ( (\frac{\partial H_0}{\partial\rho}+H_0^2) (1-u^{-1})X \\
&+\left(\Delta_\rho u+\frac{1}{2}(u^{-1}-u)(R^\rho+n(n-1)k^2)\right)X+H_0 (1-u^{-1})\frac{\partial X}{\partial\rho} ) d\Sigma_\rho\\
=&\int_{\Sigma_\rho} ( (\frac{\partial H_0}{\partial\rho}+H_0^2) (1-u^{-1})X
+\frac{1}{2}(u^{-1}-u)(R^\rho+n(n-1)k^2)X\\
&+H_0 (1-u^{-1})\frac{\partial X}{\partial\rho}+(u-1)\Delta_\rho X )d\Sigma_\rho\\
=&\int_{\Sigma_\rho} (\textrm{I}+\textrm{II}+\textrm{III}+\textrm{IV}) d\Sigma_\rho
\end{split}
\end{equation}
where we have used \eqref{eq: pde} in line 4 and divergence theorem
in line 5. The Gauss equation gives
\begin{equation}\label{eq: gauss}
R^\rho=-(n-1)(n-2)k^2+H_0^2-|A|^2
\end{equation}
where $A$ is the second fundamental form of $\Sigma_\rho$ with
respect to the hyperbolic metric $g'$. By the evolution equation of $H_0$ (\cite{WY} Equation (2.4))
and the Gauss equation \eqref{eq: gauss},
\begin{equation*}
\begin{split}
\frac{\partial H_0}{\partial\rho}
=-|A|^2+(n-1)k^2
=R^\rho+(n-1)^2k^2-H_0^2.
\end{split}
\end{equation*}
So $\textrm{I}=(R^\rho +(n-1)^2 k^2)(1-u^{-1})X$.\\
Direct calculation gives
\begin{equation*}
\begin{split}
&(R^\rho+(n-1)^2k^2)(1-u^{-1})+\frac{1}{2}(u^{-1}-u)(R^\rho+n(n-1)k^2)\\
=&-\frac{1}{2}u^{-1}(u-1)^2 (R^\rho+(n-1)(n-2)k^2)-(n-1)(u-1)k^2.
\end{split}
\end{equation*}
So we have
\begin{equation*}
\textrm{I}+\textrm{II}=(-\frac{1}{2}u^{-1}(u-1)^2 \left (R^\rho+(n-1)(n-2)k^2 \right)-(n-1)(u-1)k^2 ) X.
\end{equation*}
By lemma \ref{lem: X}, $\Delta_\rho X-(n-1)k^2 X=-H_0 \frac{\partial X}{\partial\rho}$.
Therefore
\begin{equation*}
\begin{split}
 & \textrm{I}+\textrm{II}+\textrm{III}+\textrm{IV}\\
=&-\frac{1}{2}u^{-1}(u-1)^2 \left(R^\rho+(n-1)(n-2)k^2 \right)X  \\ &+(u-1)\left(\frac{H_0}{u}\frac{\partial X}{\partial\rho}+\Delta _\rho X-(n-1)k^2 X \right)\\
=&-\frac{1}{2}u^{-1}(u-1)^2 \left(R^\rho+(n-1)(n-2)k^2 \right)X  +(u-1)(u^{-1}-1)H_0\frac{\partial X}{\partial \rho}\\
=&-u^{-1}(u-1)^2  \left(\left(R^\rho+(n-1)(n-2)k^2 \right)\frac{X}{2} + H_0\frac{\partial X}{\partial\rho}\right).
\end{split}
\end{equation*}
This together with \eqref{eq: d} gives the result.
\end{proof}
\section{A positive mass theorem}\label{sec: pmt}
We will need the following positive mass type theorem (cf. \cite{WY} Theorem 6.1 and Corollary 6.3) which was proved by Wang and Yau when $n=3$.
\begin{theorem}[Wang-Yau]\label{thm: PMT}
Let $n\geq 3$
and $(\Omega, g)$ is a n-dimensional compact spin manifold with nonempty smooth boundary which is a topological sphere. Suppose the scalar curvature $R$ of $\Omega$ satisfies $R\geq -n(n-1) k^2$, the sectional curvature of its boundary $\Sigma$ satisfies $K>-k^2$, the mean curvature of the boundary with respect to outward unit normal is positive, and $\Sigma$ can be isometrically embedded uniquely into $\mathbb{H}^n_{-k^2}$ in $\mathbb{R}^{n, 1}$. Then
$$\lim_{\rho\rightarrow\infty} \int_{\Sigma_\rho}(H_0-H)X\cdot \zeta\leq 0$$
for any future-directed null vector $\zeta$ in $\mathbb{R}^{n, 1}$. Here $H_0, H$ are functions in $(p, \rho)$ as in \eqref{eq: H} and $X\in\mathbb{H}^n_{-k^2}\subset \mathbb{R}^{n, 1}$ is the position vector of the isometric embedding of $\Sigma$.\\
In other words, $\displaystyle\lim_{\rho\rightarrow\infty} \int_{\Sigma_\rho}(H_0-H)X \,d\Sigma_\rho$ is a future non-spacelike vector.
\end{theorem}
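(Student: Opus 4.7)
The plan is to adapt Witten's spinor argument to the asymptotically hyperbolic setting. I would first glue $(\Omega, g)$ with $(\mathbb{H}^n_{-k^2}\setminus \Omega_0, g'')$ along $\Sigma$ via the given isometric embedding to form a complete Lipschitz Riemannian manifold $(\tilde M, \tilde g)$. By the boundary condition $u(p,0)=H_0(p,0)/H(p)$ in \eqref{eq: pde} and the identity \eqref{eq: H}, the mean curvatures of $\Sigma$ seen from the two sides agree, so $\tilde g$ is $C^0$ across $\Sigma$ with no distributional scalar-curvature mass along $\Sigma$, and its scalar curvature satisfies $\tilde R \geq -n(n-1)k^2$. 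Proposition \ref{prop: u} shows the end is asymptotically hyperbolic, and since $\Omega$ is spin and the model exterior is simply connected, the spin structure extends uniquely to $\tilde M$.

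On the model space $\mathbb{H}^n_{-k^2}$ one has a finite-dimensional space of imaginary Killing spinors $\phi_0$, defined by $\nabla^{\mathbb{H}}_V \phi_0 = \tfrac{ik}{2}\, V\cdot\phi_0$. To each $\phi_0$ one associates a vector $V(\phi_0)\in \mathbb{R}^{n,1}$ via a spinor-bilinear expression involving Clifford multiplication by the Minkowski frame. The algebraic fact (the content of Proposition \ref{prop: null} advertised in the introduction) is that $V(\phi_0)$ is always future-directed null, and the map $\phi_0\mapsto V(\phi_0)$ surjects onto the entire future null cone; this is what reduces the theorem to proving the inequality once for each $V(\phi_0)$.

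The analytic core is to solve, for each prescribed $\phi_0$, the Dirac equation $\tilde D\phi=0$ on $\tilde M$ with $\phi$ asymptotic to $\phi_0$ at infinity. Existence and decay of such $\phi$ on asymptotically hyperbolic spin manifolds follow from weighted-Sobolev Fredholm estimates, with coercivity supplied by the modified Lichnerowicz identity associated with the Killing connection $\hat\nabla_V := \nabla_V - \tfrac{ik}{2}\, V\cdot$, whose kernel in the model is precisely the space of imaginary Killing spinors. Integrating
\begin{equation*}
\int_{\tilde M\cap\{r\leq \rho\}}\Bigl(|\hat\nabla \phi|^2 + \tfrac{1}{4}\bigl(\tilde R + n(n-1)k^2\bigr)|\phi|^2\Bigr)\, dV = \mathrm{Bdy}(\Sigma) + \mathrm{Bdy}(\Sigma_\rho)
\end{equation*}
over the exhaustion, the left side is non-negative by the hypothesis on $\tilde R$; the matching of mean curvatures makes the interior boundary contribution $\mathrm{Bdy}(\Sigma)$ cancel between the two sides; and a computation using the Killing-spinor structure and the identities for the position vector on $\mathbb{H}^n_{-k^2}$ identifies $\lim_{\rho\to\infty}\mathrm{Bdy}(\Sigma_\rho) = -\lim_{\rho\to\infty}\int_{\Sigma_\rho}(H_0-H)\, X\cdot V(\phi_0)\, d\Sigma_\rho$, yielding the desired inequality.

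The hardest step will be this last identification. One must transplant the Killing spinor on $(\mathbb{H}^n_{-k^2}, g')$ to the perturbed metric $g''$ using the gauge transformation $A$ of Proposition \ref{prop: u}(3), control the resulting error terms of size $O(e^{-nk\rho})$ uniformly in $\rho$ via the asymptotics in Lemma \ref{lem: est} and Proposition \ref{prop: u}, and combine the Killing-spinor PDE with the position-vector relation $H_0\,\partial_\rho X + \Delta_\rho X = (n-1)k^2 X$ of Lemma \ref{lem: X} to extract the exact boundary coefficient $(H_0-H)\, X\cdot V(\phi_0)$. This is the technical heart of the argument and corresponds to Proposition \ref{prop: spinor} cited in the introduction.
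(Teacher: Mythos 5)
Your outline is correct and reproduces the paper's argument in its essential structure: glue $(\Omega,g)$ to $(\mathbb{H}^n_{-k^2}\setminus\Omega_0,g'')$ so that the mean curvatures match across $\Sigma$, transplant a Killing spinor $\phi_0$ via the gauge map $A$, solve the (Killing-)Dirac equation with that asymptotic model, and feed the resulting spinor into the Lichnerowicz boundary formula to identify the surface term at infinity with $-k\int_{\Sigma_\rho}(H_0-H)X\cdot\zeta_a$, invoking the surjectivity of $a\mapsto\zeta_a$ onto null vectors (Propositions \ref{prop: spinor}, \ref{prop: null}). The only differences from the paper are cosmetic (a sign convention in the Killing equation, and writing the existence step as ``$\phi$ asymptotic to $\phi_0$'' rather than the paper's explicit Lax--Milgram construction of $\phi=\phi_1-\phi_0$ with $\widehat D\phi_1=\widehat D\phi_0$ and the vanishing of cross terms along a subsequence).
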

As a corollary,
\begin{corollary}\label{corollary}
With the same assumptions as in Theorem \ref{thm: PMT},
$$\lim_{\rho\rightarrow \infty}\int_{\Sigma_\rho}(H_0-H)\cosh kr \,d\Sigma_\rho\geq 0$$
where $r$ is defined in \eqref{eq: spherical}.
\end{corollary}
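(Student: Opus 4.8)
The plan is to read off the Corollary from Theorem \ref{thm: PMT} by isolating the time component of the limiting vector. First I would note that in the parametrization \eqref{eq: spherical} the last coordinate of the position vector is $t=\frac1k\cosh kr$, so that $\cosh kr = kt$ and hence
$$
\int_{\Sigma_\rho}(H_0-H)\cosh kr\, d\Sigma_\rho = k\left(\int_{\Sigma_\rho}(H_0-H)X\, d\Sigma_\rho\right)_{n+1},
$$
the $(n+1)$-st (time) component of the vector-valued integral $\int_{\Sigma_\rho}(H_0-H)X\, d\Sigma_\rho$. Thus it suffices to show that the time component of $V:=\lim_{\rho\to\infty}\int_{\Sigma_\rho}(H_0-H)X\, d\Sigma_\rho$ is non-negative; the limit exists by the same estimates underlying Theorem \ref{thm: PMT}.

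Next I would invoke Theorem \ref{thm: PMT}, which asserts that $V$ is future non-spacelike, i.e. $V\cdot\zeta\le 0$ for every future-directed null $\zeta\in\mathbb{R}^{n,1}$. Writing $V=(V_1,\dots,V_n,V_{n+1})$ and taking $\zeta=(\omega,1)$ with $\omega\in\mathbb{S}^{n-1}$ --- which are, up to positive scaling, exactly the future-directed null vectors --- the inequality reads $\sum_{i=1}^n V_i\omega_i - V_{n+1}\le 0$ for every unit vector $\omega$. Maximizing the left-hand side over $\omega$ gives $\bigl(\sum_{i=1}^n V_i^2\bigr)^{1/2}\le V_{n+1}$; in particular $V_{n+1}\ge 0$. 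Combining this with the displayed identity and letting $\rho\to\infty$ gives the assertion.

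I do not expect any genuine obstacle: the Corollary is a formal consequence of Theorem \ref{thm: PMT} together with the explicit form of $X$. The only points requiring a little care are the time-orientation convention --- one must confirm that a future non-spacelike vector lies in the closed future cone $\{V_{n+1}\ge(\sum_{i=1}^n V_i^2)^{1/2}\}$, so that its time component is automatically non-negative --- and the existence of the limit on the left-hand side, which is guaranteed by the convergence already established for the vector-valued integral in Theorem \ref{thm: PMT}.
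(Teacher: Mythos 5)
Your proposal is correct and is precisely the argument the paper intends (the Corollary is stated without proof as an immediate consequence of Theorem \ref{thm: PMT}): identify $\cosh kr = kt$ as $k$ times the time component of $X$, then observe that a future non-spacelike vector in $\mathbb{R}^{n,1}$ (with metric $\sum dx_i^2 - dt^2$) necessarily has non-negative time component, which follows by testing $V\cdot\zeta\le 0$ against all null $\zeta=(\omega,1)$, $\omega\in\mathbb{S}^{n-1}$, and maximizing.
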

\subsection{Killing spinors on $(\mathbb{H}^n_{-k^2}, g')$}
The proof of Theorem \ref{thm: PMT} requires the existence of Killing spinor fields (i.e. a section of the spinor bundle $S(\mathbb{H}^n_{-k^2}, g')$ satisfying the Killing equation \eqref{eq: killing}, see \cite{lawson1989spin}) on the hyperbolic space. A Killing spinor  $\phi'$ on $(\mathbb{H}^n_{-k^2}, g')$ satisfies the equation
\begin{equation}\label{eq: killing}
\nabla'_V \phi'+\frac{\sqrt{-1}}{2}k c'(V)\phi'=0 \text{ \quad for any tangent vector }V
\end{equation}
where $c'(V)$ is the Clifford multiplication by $V$ and $\nabla'$ is the spin connection  (with respect to the hyperbolic metric $g'$). The Killing spinors on hyperbolic spaces were studied by Baum \cite{B}, who proved that on $\mathbb{H}^{n}_{-k^2}$, the set of all Killing spinors is parametrized by $a\in \mathbb{C}^{2^m}$, $m=\lfloor\frac{n}{2}\rfloor$ (integer part). The following two propositions are crucial.
\begin{proposition}\label{prop: spinor}
Let $\phi_{a, 0}'$ be the Killing spinor on $(\mathbb{H}^n_{-k^2},g')$, corresponding to $a\in \mathbb{C}^{2^m}$, $m=\lfloor \frac{n}{2}\rfloor$, then
$$|\phi_{a, 0}'|_{g'}^2=-2kX\cdot \zeta_a$$
where $\cdot$ denotes the Lorentz inner product in $\mathbb{R}^{n, 1}$ and
\begin{equation}\label{eq: zeta}
\zeta_a=\sum_{j=1}^n\langle \sqrt{-1} c(e_j)a, a\rangle e_j- \langle  \sqrt{-1} c(e_0)a, a\rangle e_0.
\end{equation}
Here $\langle \cdot,\cdot \rangle$ is the inner product in $\mathbb{C}^{2^m}$, $c(e_j)$ denotes the Clifford multiplication by the Clifford matrices (as defined in \cite{B} p.206) for the orthonormal basis $\frac{\partial}{\partial t}=e_0,\frac{\partial }{\partial x_j}=e_j$ in $\mathbb{R}^{n, 1}$ ($1\leq j\leq n$) and $c(e_0)$ is defined to be $\sqrt{-1}I$, where $I$ is the identity matrix.
\end{proposition}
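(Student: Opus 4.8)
The plan is to compute $|\phi_{a,0}'|_{g'}^2$ explicitly using Baum's description of the Killing spinors on $\mathbb{H}^n_{-k^2}$, and then to recognize the resulting expression as $-2kX\cdot\zeta_a$ by matching the coordinate parametrization \eqref{eq: spherical} of $X$ against the definition \eqref{eq: zeta} of $\zeta_a$. Concretely, in the half-space or hyperboloid model used by Baum, a Killing spinor associated to $a\in\mathbb{C}^{2^m}$ is given by an explicit formula of the form $\phi_{a,0}'(X) = g(X)\cdot a$, where $g(X)$ is built from the Clifford action of the position vector; the key algebraic fact is that $g(X)^\ast g(X)$ acts on $a$ in a way that reproduces the quadratic form $\langle \sqrt{-1}\, c(e_j)a,a\rangle$ with coefficients $x_j$, and $\langle \sqrt{-1}\, c(e_0)a,a\rangle = |a|^2$ with coefficient $t$. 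Unwinding this gives
\[
|\phi_{a,0}'|_{g'}^2 = 2k\left(\sum_{j=1}^n x_j\,\langle \sqrt{-1}\, c(e_j)a,a\rangle - t\,|a|^2\right) = -2k\,X\cdot\zeta_a,
\]
the minus sign coming from the Lorentz signature $\sum dx_j^2 - dt^2$.

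The steps I would carry out, in order: (1) Recall Baum's explicit formula for the Killing spinor $\phi_{a,0}'$ on $(\mathbb{H}^n_{-k^2},g')$ in terms of $a$ and a chosen model coordinate system, and fix the normalization of the Clifford matrices $c(e_j)$ as in \cite{B} p.\ 206, with the convention $c(e_0) = \sqrt{-1}\,I$. (2) Differentiate $|\phi_{a,0}'|_{g'}^2$ using the Killing equation \eqref{eq: killing}: for any tangent vector $V$,
\[
V\!\left(|\phi_{a,0}'|_{g'}^2\right) = 2\,\mathrm{Re}\,\langle \nabla_V'\phi_{a,0}', \phi_{a,0}'\rangle = -k\,\mathrm{Re}\,\langle \sqrt{-1}\, c'(V)\phi_{a,0}', \phi_{a,0}'\rangle,
\]
which shows that $|\phi_{a,0}'|_{g'}^2$ satisfies a first-order linear ODE along geodesics, hence is determined by its value and derivative at the base point $o$. (3) Verify that the function $X\mapsto -2kX\cdot\zeta_a$ satisfies the same equation: since $X$ is the position vector and $\nabla'_V X$ in $\mathbb{R}^{n,1}$ is just $V$ itself (tangential part), one gets $V(-2kX\cdot\zeta_a) = -2k\,V\cdot\zeta_a$, and one checks the pointwise identity $-2k\,V\cdot\zeta_a = -k\,\mathrm{Re}\,\langle\sqrt{-1}\,c'(V)\phi_{a,0}',\phi_{a,0}'\rangle$ using the Clifford relations and the correspondence between $c'(V)$ (spin connection, metric $g'$) and the ambient Clifford matrices $c(e_j)$. (4) Match the initial data at $o = (0,\dots,0,1/k)$: there $X = \tfrac1k e_0$, so $-2kX\cdot\zeta_a = 2\langle\sqrt{-1}\,c(e_0)a,a\rangle\cdot\tfrac1k\cdot k = 2|a|^2$ (using $c(e_0)=\sqrt{-1}I$, so $\sqrt{-1}\,c(e_0) = -I$, up to the sign bookkeeping I would pin down), and independently $|\phi_{a,0}'|^2(o) = 2|a|^2$ from Baum's normalization; similarly the first derivatives agree by construction. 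By uniqueness of solutions to the ODE, the two functions coincide on all of $\mathbb{H}^n_{-k^2}$.

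The main obstacle I anticipate is purely bookkeeping: getting all the signs, factors of $\sqrt{-1}$, and the identification of Baum's Clifford matrices $c(e_j)$ on $\mathbb{R}^{n,1}$ with the spin-connection Clifford multiplication $c'(V)$ on $(\mathbb{H}^n_{-k^2},g')$ consistent — in particular the unusual convention $c(e_0) = \sqrt{-1}I$ (rather than a genuine Clifford generator) is what makes $\zeta_a$ a vector in $\mathbb{R}^{n,1}$ with the right causal character, and one must check this is compatible with the Killing equation. The differential-equation/uniqueness strategy in steps (2)–(4) is designed precisely to minimize the explicit computation: instead of expanding $g(X)^\ast g(X)$ in coordinates, one only needs the derivative identity plus agreement at a single point. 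An alternative, more computational route would be to substitute Baum's formula directly and expand, which works but is messier; I would fall back on it only if the ODE-matching encounters an unexpected snag. Either way, Proposition \ref{prop: null} (the companion identity, presumably showing $\zeta_a$ is null and future-directed) is what will ultimately be needed alongside this, so I would make sure the conventions here are stated so as to feed cleanly into that proposition.
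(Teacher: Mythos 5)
Your proposed route — exploit the Killing equation to get an ODE for $|\phi_{a,0}'|_{g'}^2$ along geodesics and then match data at the base point $o$ — is genuinely different from the paper's. The paper takes exactly what you call the "fallback" route: it writes down Baum's explicit formula $\phi_{a,0}'(x)=\sqrt{\tfrac{2}{1-|x|^2}}\,(a-\sqrt{-1}\,c(x)a)$ in the ball model, expands $|\phi|^2$ directly using $c(x)^2=-|x|^2$ and skew-Hermiticity of $c(e_j)$, and then matches against $-2X\cdot\zeta_a$ via the coordinate change $X=\bigl(\tfrac{2x}{1-|x|^2},\tfrac{1+|x|^2}{1-|x|^2}\bigr)$. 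This is short, self-contained, and sidesteps any discussion of how the intrinsic Clifford action $c'(V)$ interacts with the ambient Clifford matrices $c(e_j)$.

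There is, however, a genuine gap in your steps (2)–(3) as written. The identity $V(|\phi|^2)=k\,\mathrm{Im}\langle c'(V)\phi,\phi\rangle$ (or its $\mathrm{Re}\langle\sqrt{-1}\,\cdot\,\rangle$ reformulation) is \emph{not} a closed ODE for $|\phi|^2$: the right-hand side depends on $\phi$ itself, not merely on $|\phi|^2$. Consequently it is not "determined by its value (and derivative) at $o$" as a solution of that equation alone, and, more importantly, your step (3) then requires verifying the pointwise identity $-2kV\cdot\zeta_a=-k\,\mathrm{Re}\langle\sqrt{-1}\,c'(V)\phi,\phi\rangle$ at \emph{every} point. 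To check that, you must know $\phi$ at every point — i.e.\ you are back to Baum's formula everywhere, which is essentially the direct computation you were trying to avoid. As stated, the derivative identity is also just the differentiated form of the very equality you want, so the argument risks circularity.

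The fix that actually delivers the computational savings you want is to go to second order. Differentiating once more along a unit-speed geodesic $\gamma$ and using $c'(\dot\gamma)^2=-1$ together with skew-Hermiticity gives the \emph{closed} linear ODE $\tfrac{d^2}{ds^2}|\phi|^2=k^2|\phi|^2$. The function $-2kX\cdot\zeta_a$ obeys the same equation because $\ddot X = k^2 X$ along geodesics of $\mathbb H^n_{-k^2}$ (from \eqref{eq: X rho} with $X$ the ambient position vector). Then one needs Baum's formula \emph{only at the single point} $o$, where $\phi(o)=\sqrt 2\,a$, to pin down $|\phi|^2(o)=2|a|^2$ and $\tfrac{d}{ds}|\phi|^2(0)$, and to check they agree with the corresponding quantities for $-2kX\cdot\zeta_a$ (using $o=\tfrac1k e_0$, $e_0\cdot e_0=-1$, and $\sqrt{-1}\,c(e_0)=-I$). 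Uniqueness for the second-order ODE then finishes it. If you rewrite (2)–(4) along these lines you do get a correct and arguably conceptually cleaner alternative to the paper's computation; as currently phrased, step (3) does not close.
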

\begin{proof}
Let $k=1$ for simplicity. Baum (\cite[Theorem 1]{B}, $\mu=-\frac{k}{2}$) proved that in the ball model for $\mathbb {H}^n$, the Killing spinor  can be expressed as (note that the spinor bundle is trivial)
$$\phi=\phi_{a, 0}'(x)=\sqrt{\frac{2}{1-|x|^2}}(a- \sqrt{-1} c(x)a)$$
where $\displaystyle c(x)a=\sum_{j=1}^n x_j c(e_j)a$ for $x=(x_1,\cdots, x_n)$. It is easily computed that
$$|\phi|^2=2\left(\frac{1+|x|^2}{1-|x|^2}|a|^2-\frac{2}{1-|x|^2}\langle  \sqrt{-1} c(x)a, a \rangle\right).$$
The change of coordinates from the ball model to the hyperboloid model is given by
$$X=(\frac{2x}{1-|x|^2},\frac{1+|x|^2}{1-|x|^2})\in \mathbb{R}^{n,1}.$$
So
\begin{equation*}
\begin{split}
-2X\cdot \zeta_a
&=-4\sum_{j=1}^n \frac{x_j}{1-|x|^2}\langle  \sqrt{-1} c(e_j)a, a\rangle + 2\frac{1+|x|^2}{1-|x|^2}|a|^2\\
&=2\left(\frac{1+|x|^2}{1-|x|^2}|a|^2 -\frac{2}{1-|x|^2}\langle  \sqrt{-1} c(x)a, a \rangle\right)\\
&=|\phi|^2.
\end{split}
\end{equation*}
\end{proof}
\begin{proposition}\label{prop: null}
For every null vector $\zeta\in \mathbb{R}^{n,1}$ ($n\geq 2$), $\zeta=\zeta_a$ for some $a\in \mathbb{C}^{2^m}$, where $m=\lfloor\frac{n}{2}\rfloor$ and $\zeta_a$ is defined in \eqref{eq: zeta}.
\end{proposition}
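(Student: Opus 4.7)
The plan is to exploit the $\mathrm{Spin}(n)$-equivariance and natural quadratic homogeneity of the map $\Phi \colon \mathbb{C}^{2^m} \to \mathbb{R}^{n,1}$, $a \mapsto \zeta_a$. First I record two elementary observations: $\zeta_{\lambda a} = |\lambda|^2 \zeta_a$ for all $\lambda \in \mathbb{C}$; and since $\sqrt{-1}\,c(e_0) = -I$, the $e_0$-coefficient of $\zeta_a$ equals $-\langle\sqrt{-1}\,c(e_0)a,a\rangle = |a|^2 \geq 0$, so the image of $\Phi$ automatically lies in the closed future causal cone. Thus the real content of the proposition is that every future-directed null vector is realized as some $\zeta_a$ (the zero vector being trivially $\zeta_0$).

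Next I set up the equivariance. Using the defining identity $g\,c(v)\,g^{-1} = c(\bar g v)$ for $g \in \mathrm{Spin}(n)$ covering $\bar g \in SO(n)$, together with the unitarity of the spin representation (coming from the fact that the $c(e_j)$ are anti-Hermitian, hence products $c(e_j)c(e_k)$, $j\neq k$, are anti-Hermitian and exponentiate to unitaries), I compute $\langle\sqrt{-1}\,c(e_j)(ga),(ga)\rangle = \langle\sqrt{-1}\,c(\bar g^{-1}e_j)a,a\rangle$, which gives $\zeta_{ga} = \tilde{\bar g}(\zeta_a)$, where $\tilde{\bar g}$ acts as $\bar g$ on the spatial $\mathbb{R}^n$-factor and fixes $e_0$. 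Combining this with the homogeneity and the transitivity of $SO(n)$ on $S^{n-1}$ (valid for $n\geq 2$), it suffices to exhibit a single $a_0$ with $\zeta_{a_0}$ equal to the standard future null vector $\zeta^{\ast} := e_n + e_0 = (0,\ldots,0,1,1)$; an arbitrary future null $\zeta = (v,s)$ with $s>0$ is then obtained by choosing $g$ with $\bar g\,e_n = v/s$ and setting $a = \sqrt{s}\,g\,a_0$, while $\zeta = 0$ is produced by $a=0$.

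The heart of the argument is the model case, which I will treat by an eigenspace decomposition. Put $A := \sqrt{-1}\,c(e_n)$. Since $c(e_n)^2 = -I$ one has $A^2 = I$, and $A$ is Hermitian because $c(e_n)$ is anti-Hermitian, so $A$ has only the eigenvalues $\pm 1$; moreover $A$ is traceless, since conjugation by $c(e_1)$ flips its sign via the anticommutation $c(e_1)c(e_n) = -c(e_n)c(e_1)$, forcing $\dim V_\pm = 2^{m-1}\geq 1$. I pick any $a_0 \in V_+$ with $|a_0|=1$. For each $j < n$, $c(e_j)$ anticommutes with $c(e_n)$ and therefore with $A$, so Clifford multiplication by $c(e_j)$ sends $V_+$ into $V_-$; as $V_+ \perp V_-$ in the invariant Hermitian inner product, this forces $\langle\sqrt{-1}\,c(e_j)a_0,a_0\rangle = 0$ for all $j<n$. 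Together with $\langle A a_0, a_0\rangle = 1$ and the time component $|a_0|^2 = 1$, this delivers $\zeta_{a_0} = \zeta^{\ast}$.

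The main obstacle, if any, is simply keeping the sign conventions of the Clifford representation straight; the payoff is that the off-axis moments of the natural quadratic form vanish automatically because $c(e_j)$ permutes the eigenspaces of $A$, reducing the whole proof to a clean algebraic normalization.
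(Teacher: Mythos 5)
Your proof is correct and takes a genuinely different route from the paper. The paper proceeds by explicit induction on $m$, separately for $n$ odd and $n$ even, writing the Clifford matrices in dimension $n$ as tensor products of $g_1, g_2, T$ with lower-dimensional Clifford matrices (following Baum's explicit conventions) and building $a$ as an iterated tensor $a\otimes b$ that realizes a prescribed point of $\mathbb{S}^{n-1}$. Your argument instead isolates the two structural facts that make the explicit computation work: (i) the map $a\mapsto\zeta_a$ is $\mathrm{Spin}(n)$-equivariant because the spin representation is unitary (the $c(e_j)$ are anti-Hermitian, a fact the paper also implicitly needs for $\eta_a$ to be real), so by transitivity of $SO(n)$ on $\mathbb{S}^{n-1}$ one only needs to hit a single null direction; and (ii) the $\pm 1$-eigenspace decomposition of the Hermitian involution $A=\sqrt{-1}\,c(e_n)$, together with the fact that $c(e_j)$ for $j<n$ interchanges the two eigenspaces, forces all off-axis moments $\langle \sqrt{-1}c(e_j)a_0,a_0\rangle$ to vanish for $a_0\in V_+$. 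This avoids the odd/even case split and the induction entirely, and makes transparent both why the image automatically lands in the future cone (the $e_0$-component is $|a|^2$) and why the construction works in a uniform way for all $n\geq 2$. The trade-off is that the paper's construction is elementary and self-contained given Baum's matrix conventions, whereas yours invokes the lifting $\mathrm{Spin}(n)\to SO(n)$ and unitarity of the spin representation; both are standard, and your reduction to the single model vector $e_n+e_0$ is cleaner. One small remark applying equally to the paper: the proposition as literally stated cannot hold for past-directed null vectors, since $\zeta_a\cdot e_0$-component equals $|a|^2\geq 0$, and both proofs in fact establish it only for future-directed (or zero) null vectors, which is all that is used later.
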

\begin{proof}
Define $\displaystyle\eta_a=\sum_{j=1}^{n}\langle  \sqrt{-1}  c(e_j)a, a\rangle e_j$. As $\displaystyle \zeta_a=\sum_{j=1}^{n}\langle  \sqrt{-1}  c(e_j)a, a\rangle e_j + |a|^2 e_0$, it suffices to prove that for any $X\in \mathbb{S}^{n-1}\subset \mathbb{R}^{n}$, there exists $a\in \mathbb{C}^{2^m}$ with $|a|=1$ such that $\eta_a=X$. This can be proved in a similar way as in \cite{Wang} p.285-286. Here we use a different proof which is more explicit. We divide into two cases: (i) $n$ is odd and (ii) $n$ is even.

(i) For the odd case where $n=2m+1$, we apply induction on $m$. When $2m+1=3$, this is done in \cite{WY} (p.17). We state it here for later use. The three Clifford matrices for $n=3$ are $g_1, g_2$ and $ \sqrt{-1} T$ (see \cite[p. 206]{B}), where
\begin{equation*}
\begin{split}
g_1=
\begin{pmatrix}
 \sqrt{-1}  & 0\\
0 & - \sqrt{-1}
\end{pmatrix},
g_2=
\begin{pmatrix}
0 &  \sqrt{-1} \\
 \sqrt{-1}  & 0
\end{pmatrix},
T=
\begin{pmatrix}
0 & - \sqrt{-1} \\
 \sqrt{-1}  & 0
\end{pmatrix}.
\end{split}
\end{equation*}
So for any $\vec z\in \mathbb{S}^2$, there exists $a\in\mathbb{C}^2$ with $|a|=1$ such that
\begin{equation}\label{eq: z}
\vec z=(\langle  \sqrt{-1}  g_1 (a), a\rangle, \langle  \sqrt{-1}  g_2 (a), a\rangle, \langle -T (a), a\rangle).
\end{equation}
Assume the result is true for $n=2m-1$, and denote the Clifford matrices in dimension $2m-1$ simply by $\{c_j\}_{j=1}^{2m-1}$. Let $\{d_j\}_{j=1}^{2m+1}$ be the Clifford matrices in dimension $2m+1$, as defined in \cite[p. 206 Equation (2)]{B}. Then it is easily seen that
\begin{equation}\label{eq: ef}
\begin{split}
\begin{cases}
d_j=I\otimes c_j \quad \text{for $j=1, \cdots, 2m-2$,  $I$ is the $2\times2$ identity matrix},\\
d_{2m-1}=- \sqrt{-1}  g_1 \otimes c_{2m-1},\\
d_{2m}=- \sqrt{-1}  g_2 \otimes c_{2m-1},\\
d_{2m+1}=T\otimes c_{2m-1}.
\end{cases}
\end{split}
\end{equation}
Now let $X\in \mathbb{S}^{2m}$, then $X=(y_1, y_2, \cdots, y_{2m-1}\vec z)$ for some $y=(y_1, \cdots, y_{2m-1})\in \mathbb{S}^{2m-2}$ and $\vec z\in \mathbb{S}^2$. By induction assumption, there exists $b\in \mathbb{C}^{2^{m-1}}$ with $|b|=1$ such that
$$y=(\langle  \sqrt{-1}  c_1 (b), b\rangle, \cdots, \langle  \sqrt{-1}  c_{2m-1} (b), b\rangle)$$ and by \eqref{eq: z}, there exists $a\in \mathbb{C}^2$ with $|a|=1$ such that $$-\vec z=(\langle  \sqrt{-1}  g_1 (a), a\rangle, \langle  \sqrt{-1}  g_2 (a), a\rangle, \langle -T(a), a\rangle).$$
Combining these with \eqref{eq: ef}, it is easily seen that
$\eta_{a\otimes b}=X$.

(ii) For the even case, we also apply induction on $m$. When $n=2$, the two Clifford matrices are $g_1$ and $g_2$ and $\eta_a =(-|a_1|^2+|a_2|^2, -a_1 \overline a_2- a_2 \overline a_1)$ where $a=(a_1, a_2)\in \mathbb{C}^2$.
For $X=(\cos \theta, \sin \theta)\in \mathbb{S}^1$, just take $a=(-\sin \frac{\theta}{2}, \cos \frac{\theta}{2})$ so that $\eta_a=X$.

Assume the result is true for $n=2m$ and denote $\{c_j\}_{j=1}^{2m}$ to be the corresponding Clifford matrices as defined in \cite[p. 206 Equation (1)]{B}. Let $\{d_j\}_{j=1}^{2m+2}$ be the Clifford matrices for $n=2m+2$. Then it is easily seen that
\begin{equation}\label{eq: ef even}
\begin{split}
\begin{cases}
d_1=I\otimes g_1 \quad\text{where $I$ is the $2^m\times 2^m$ identity matrix},\\
d_2=I\otimes g_2 \quad\text{where $I$ is the $2^m\times 2^m$ identity matrix},\\
d_{j+2}=c_j \otimes T\quad \text{for $j=1, \cdots, 2m$.}
\end{cases}
\end{split}
\end{equation}
Now let $X\in \mathbb{S}^{2m+1}$, then $X=(z_1, z_2, z_3 \vec y)$ for some $(z_1, z_2, z_3)\in \mathbb{S}^2$ and $\vec y \in \mathbb{S}^{2m-1}$. By \eqref{eq: z}, there exists $b\in \mathbb{C}^2$ with $|b|=1$ such that
$$(z_1, z_2, -z_3)=(\langle  \sqrt{-1}  g_1 (b), b\rangle, \langle  \sqrt{-1}  g_2 (b), b\rangle, \langle -T(b), b\rangle)$$
and by induction assumption, there exists $a\in \mathbb{C}^{2^{m-1}}$ with $|a|=1$ such that
$$\vec y=(\langle  \sqrt{-1}  c_1(a),a\rangle , \cdots, \langle  \sqrt{-1}  c_{2m}(a),a \rangle ).$$
Combining these with \eqref{eq: ef even}, it is easily seen that $\eta_{a\otimes b}=X.$
\end{proof}
\subsection{The hypersurface Dirac operator}
In this subsection, we will give some general results for the hypersurface Dirac operator. Most of the materials in this section can be found, for example, in \cite{Hij}.

Recall that on the spinor bundle $S(M^n)$ over a spin manifold $(M,g)$, the Dirac operator  $D$ is defined to be
$$D\psi=\sum_{i=1}^n c_M(e_i)\nabla^M_{e_i}\psi$$
for any spinor $\psi\in \Gamma(S(M))$, where $\{e_i\}_{i=1}^n$ is a local orthonormal frame on $M$, $c_M$ is the Clifford multiplication and $\nabla^M$ is the spin connection on $S(M)$. The local formula for $\nabla^M$ is given by \cite[Theorem 4.14]{lawson1989spin}
$$\nabla^M_{e_i}\psi =e_i(\psi)+\frac{1}{2}\sum_{j<k}^n \langle \nabla_{e_i}e_j, e_k\rangle c_M(e_j)c_M(e_k)\psi,$$
where $\{e_i\}_{i=1}^n$ are orthonormal frames on $M$. For simplicity, let us write $c$ for $c_M$ and $\nabla$ for $\nabla^M$.

Now, for a spin manifold $M$, if $\Sigma\subset M$ is an oriented smooth hypersurface, then $M$ induces a natural spin structure on $\Sigma$, compatible with the induced orientation from $M$.

 We let $S:=S( M^n)|_{\Sigma}$, the restriction of the spinor bundle of $M$ to $\Sigma$. Then it can be shown that $S=S(\Sigma)$ when $n$ is odd and $S=S(\Sigma)\oplus S(\Sigma)$ when $n$ is even. We will work on $S$ (instead of $S(\Sigma)$).
\begin{definition}

 We define the hypersurface spin connection $\nabla ^S$, the hypersurface Clifford multiplication $c_S$ and the hypersurface Dirac operator  $D^S$ on $S$ by
\begin{equation*}
\begin{split}
\nabla^S_X \psi &=\nabla_X \psi+\frac{1}{2}c(\nu)c(B(X))\psi, \\
c_S(X)&=-c(\nu)c(X),\\
D^S\psi&=\sum_{a=1}^{n-1}c_S(e_a)\nabla^S_{e_a}\psi.
\end{split}
\end{equation*}
 where $\nu$ is a fixed unit normal (outward if this makes sense) and $B$ is the shape operator on $\Sigma $, i.e. $B(X)=-\nabla_X\nu$.
 \end{definition}
 In local formula, for $\{e_a\}_{a=1}^{n-1}$ orthonormal on $\Sigma$ and $e_n=\nu$ be the unit outward normal,
\begin{equation}\label{eq: nabla}
\begin{split}
\nabla_{e_a}^S\psi
&=\nabla_{e_a}\psi+\frac{1}{2} \sum_{b=1}^{n-1}h_{ab} c(e_b)c(e_n)\psi.
\end{split}
\end{equation}
(It can be verified that $\nabla^S=\nabla^{\Sigma }\oplus \nabla^{\Sigma }$ and $c_S=c_{\Sigma }\oplus -c_{\Sigma }$ when $n$ is even. )\\
\begin{definition}
We define the Killing spin connection  $\widehat \nabla$, Killing Dirac  operator $\widehat D$ and the Killing boundary operator  $\widehat B$ respectively by
\begin{equation}\label{eq: operators}
\begin{split}
\widehat \nabla_V\psi&=\nabla _V\psi +\frac{\sqrt{-1}}{2}k c(V)\psi,\\
\widehat D \psi&=\sum_{i=1}^n c(e_i)\widehat \nabla_{e_i}\psi,\\
\widehat B \psi&=\sum_{a=1}^{n-1}c(e_n)c(e_a)\widehat \nabla_{e_a}\psi
= \widehat \nabla _{e_n}\psi+ \widehat D\psi.
\end{split}
\end{equation}
\end{definition}
Actually $\widehat B$ is the boundary operator for the Lichnerowicz type formula   (\cite{WY} Equation 3.2): for any bounded region $U$ with smooth boundary, we have
\begin{equation}\label{eq: Lichnerowicz}
\int_{U}\left(\langle\widehat \nabla \psi,\widehat \nabla \varphi\rangle +\frac{1}{4} (R+n(n-1)) \langle\psi, \varphi\rangle - \langle\widehat D \psi, \widehat D \varphi\rangle\right)
=\int_{\partial U}\langle \psi, \widehat B\varphi\rangle.
\end{equation}
where $R$ is the scalar curvature.

From now on until the end of this section, the indices $a, b, c$ run from $1$ to $n-1$ and $i, j, k$ run from $1$ to $n$. Repeated indices will be summed over.

\begin{proposition}\label{prop: bound}
Let $\psi$ be a spinor on $M$ and $H$ is the mean curvature of $\Sigma\subset M$. Then on $\Sigma$,
\begin{equation*}
\widehat B \psi=-D^S \psi-\frac{H}{2}\psi-\frac{\sqrt{-1}}{2}k (n-1) c(e_n)\psi.
\end{equation*}

\end{proposition}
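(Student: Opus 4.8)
The plan is to verify the identity by a direct Clifford-algebra computation, starting from the explicit expression $\widehat B\psi=\sum_{a=1}^{n-1}c(e_n)c(e_a)\widehat\nabla_{e_a}\psi$ given in \eqref{eq: operators}, and peeling off the lower-order terms of each operator one at a time.

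First I would split off the Killing term. Substituting $\widehat\nabla_{e_a}\psi=\nabla_{e_a}\psi+\tfrac{\sqrt{-1}}{2}k\,c(e_a)\psi$ yields
\[
\widehat B\psi=\sum_{a=1}^{n-1}c(e_n)c(e_a)\nabla_{e_a}\psi+\frac{\sqrt{-1}}{2}k\sum_{a=1}^{n-1}c(e_n)c(e_a)c(e_a)\psi .
\]
Since $c(e_a)c(e_a)=-1$ for each $a$, the second sum collapses to $-\tfrac{\sqrt{-1}}{2}k(n-1)c(e_n)\psi$, which is precisely the last term of the asserted formula. So it remains to identify $\sum_{a}c(e_n)c(e_a)\nabla_{e_a}\psi$ with $-D^S\psi-\tfrac H2\psi$.

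For that I would expand $D^S\psi$ from its definition, using $c_S(e_a)=-c(e_n)c(e_a)$ together with the local formula \eqref{eq: nabla} for $\nabla^S$:
\[
D^S\psi=-\sum_{a}c(e_n)c(e_a)\nabla_{e_a}\psi-\frac12\sum_{a,b}h_{ab}\,c(e_n)c(e_a)c(e_b)c(e_n)\psi .
\]
In the double sum the symmetry $h_{ab}=h_{ba}$ lets me replace $c(e_a)c(e_b)$ by its symmetric part $-\delta_{ab}$, so $\sum_{a,b}h_{ab}c(e_a)c(e_b)=-\sum_a h_{aa}=-H$; conjugating by $c(e_n)$ and using $c(e_n)c(e_n)=-1$ converts the shape-operator term into $\tfrac H2\psi$. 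Hence $\sum_a c(e_n)c(e_a)\nabla_{e_a}\psi=-D^S\psi-\tfrac H2\psi$, and inserting this into the displayed expression for $\widehat B\psi$ from the previous step gives exactly $\widehat B\psi=-D^S\psi-\tfrac H2\psi-\tfrac{\sqrt{-1}}{2}k(n-1)c(e_n)\psi$.

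There is no genuine obstacle here; the whole argument is bookkeeping with the Clifford relations. The one point that needs care is consistency of sign conventions: I am using $c(X)c(Y)+c(Y)c(X)=-2\langle X,Y\rangle$, so that every unit vector squares to $-1$, and $H=\sum_a h_{aa}$ denotes the trace of the shape operator $B$ rather than its average. With the opposite Clifford sign, or with a different normalization of $H$, the numerical coefficients would change, so the computation must be carried out strictly in the conventions fixed by \eqref{eq: nabla} and \eqref{eq: operators}.
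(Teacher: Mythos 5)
Your argument is correct and follows essentially the same route as the paper's: expand $\widehat B\psi=c(e_n)c(e_a)\widehat\nabla_{e_a}\psi$, peel off the Killing term to get $-\tfrac{\sqrt{-1}}{2}k(n-1)c(e_n)\psi$, and use the definitions of $\nabla^S$, $c_S$ and $D^S$ to identify $\sum_a c(e_n)c(e_a)\nabla_{e_a}\psi$ with $-D^S\psi-\tfrac{H}{2}\psi$. One minor slip in the prose: since $c(e_n)c(e_a)c(e_b)c(e_n)=-c(e_a)c(e_b)$, the shape-operator term $-\tfrac12\sum_{a,b}h_{ab}\,c(e_n)c(e_a)c(e_b)c(e_n)\psi$ equals $-\tfrac{H}{2}\psi$, not $+\tfrac{H}{2}\psi$; your stated conclusion is nevertheless the correct one and matches the paper.
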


\begin{proof}
We have $\widehat B \psi=c(e_n)c(e_a)\widehat \nabla_{e_a}\psi$.
Using \eqref{eq: nabla},
$$c(e_n)c(e_a)\widehat \nabla_{e_a}\psi =c(e_n)c(e_a)\nabla^S _{e_a}\psi+\frac{1}{2}h_{ab}c(e_a)c(e_b)-\frac{\sqrt{-1}}{2}k (n-1) c(e_n)\psi.$$
We have $c_S(e_a)=c(e_a)c(e_n)$, so $c(e_n)c(e_a)\nabla^S_{e_a}=-D^S$. Also, $h_{ab}c(e_a)c( e_b)=-H$. The result follows.
\end{proof}

Let us now return to the hyperbolic space. More precisely, define $M=\mathbb{H}^n_{-k^2}\setminus \Omega_0$. Let $A:(TM, g')\rightarrow(TM, g'')$ be the Gauge transformation defined by $A \frac{\partial}{\partial \rho}=\frac{1}{u}\frac{\partial}{\partial\rho}$ ($u$ as defined in \eqref{eq: pde}) and $AV=V$ for any vector $V$ tangential to $\Sigma_\rho$. $A$ can be lifted to the spinor bundles as an isometry \cite{andersson1998scalar}, i.e. $A: S(M, g')\rightarrow S(M, g'')$. Also,
$$A(c'(X)\psi)=c''(AX)A\psi$$
where $c'$ (resp. $c''$) denotes the Clifford multiplication associated to $g'$ (resp. $g''$). We will also denote by $e_n''$ (resp. $e_n'$) to denote the unit outward normal of $\Sigma_\rho$ with respect to $g''$ (resp. $g'$).
\begin{proposition}\label{prop: D^S}
  Let $\phi'_0$ be a Killing spinor with respect to $\nabla'$ on $M=   \mathbb{H}^n_{-k^2}\setminus \Omega_0$ and $\phi_0=A\phi'_0$. Let $D^S$ be the hypersurface Dirac operator on $\Sigma_\rho$ with respect to $(M, g'')$ as defined in \eqref{eq: operators}. Then on $\Sigma_\rho$,
$$-D^S \phi_0=\frac{H_0}{2} \phi_0 +\frac{\sqrt{-1}}{2}k(n-1)c''(e_n'')\phi_0.$$
(Recall that $H_0$ is the mean curvature of $\Sigma_\rho$ with respect to $g'$. )
\end{proposition}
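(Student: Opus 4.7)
My strategy is to apply Proposition~\ref{prop: bound} to $\phi'_0$ in the ambient metric $g'$---where the Killing equation forces the boundary term to vanish---and then transport the resulting identity to $(M,g'')$ via the gauge isometry $A$ on the spinor bundle.

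For the first step, I work in $(M,g')$. Since $\phi'_0$ satisfies $\widehat\nabla'_V\phi'_0=0$ for every tangent vector $V$, in particular $\widehat\nabla'_{e_a}\phi'_0=0$ when $e_a$ is tangent to $\Sigma_\rho$, so the $g'$-version of the Killing boundary operator annihilates $\phi'_0$:
\begin{equation*}
\widehat B'\phi'_0=\sum_{a=1}^{n-1}c'(e_n')c'(e_a)\widehat\nabla'_{e_a}\phi'_0=0.
\end{equation*}
Applying Proposition~\ref{prop: bound} to $(M,g')$---where the mean curvature of $\Sigma_\rho$ is $H_0$---rearranges this to
\begin{equation*}
-D^{S,'}\phi'_0=\frac{H_0}{2}\phi'_0+\frac{\sqrt{-1}}{2}k(n-1)c'(e_n')\phi'_0.
\end{equation*}

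For the second step, I push this identity forward by $A$. Because $A\tfrac{\partial}{\partial\rho}=\tfrac{1}{u}\tfrac{\partial}{\partial\rho}$, the $g'$-unit normal $e_n'$ is sent to the $g''$-unit normal $e_n''$, so the identity $Ac'(X)=c''(AX)A$ gives $A(c'(e_n')\phi'_0)=c''(e_n'')\phi_0$. The remaining ingredient is the intertwining $A\circ D^{S,'}=D^{S,''}\circ A$ on $\Sigma_\rho$; granting this and applying $A$ to the displayed identity yields exactly the claim.

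The intertwining is the main (and essentially the only) subtle point. To justify it, observe that $D^S$ is intrinsic to $\Sigma_\rho$: the correction $\tfrac12 c(\nu)c(B(\cdot))$ in $\nabla^S$ cancels the normal component of the ambient spin connection, so both $\nabla^{S,'}$ and $\nabla^{S,''}$ reduce to the spin connection of the \emph{common} induced metric $g_\rho$, and $c_S$ is in each case the induced Clifford multiplication. Since $A$ is the identity on $T\Sigma_\rho$ and sends $e_n'$ to $e_n''$, its lift to spinors restricts on $\Sigma_\rho$ to the canonical identification between the two induced spinor bundles over $(\Sigma_\rho,g_\rho)$, and therefore intertwines $D^{S,'}$ with $D^{S,''}$.
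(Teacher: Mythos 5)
Your approach is correct and genuinely different from the paper's. The paper proves the identity by a direct computation in the $g''$-picture: it introduces $\overline\nabla:=A\nabla' A^{-1}$, shows $\overline\nabla_{e_a}\phi_0=-\tfrac{\sqrt{-1}}{2}kc''(e_a)\phi_0$, establishes (via the local formula for the spin connection and the equalities $g'|_{\Sigma_\rho}=g''|_{\Sigma_\rho}$, $Ae_n'=e_n''$) the relation $\overline\nabla_{e_a}\psi=\nabla^S_{e_a}\psi-\tfrac12 h^0_{ab}c''(e_b)c''(e_n'')\psi$, and then computes $D^S\phi_0=-c''(e_n'')c''(e_a)\nabla^S_{e_a}\phi_0$ term by term. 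You instead use Proposition~\ref{prop: bound} as a black box: since $\widehat\nabla'\phi_0'\equiv 0$ forces $\widehat B'\phi_0'=0$, the $g'$-version of that proposition immediately gives the desired identity for $D^{S,'}\phi_0'$, and you transport it by $A$. This is cleaner and more conceptual, and it makes transparent why the $g'$-mean curvature $H_0$ appears rather than the $g''$-mean curvature.

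The one point where you are lighter than ideal is the intertwining $A\circ D^{S,'}=D^{S,''}\circ A$. Your heuristic (``$D^S$ is intrinsic to $(\Sigma_\rho,g_\rho)$, and $A$ restricts to the canonical identification of the two induced spinor bundles'') is morally right, but as written it glosses over the fact that $A$ is an isomorphism between two \emph{a priori} different spinor bundles rather than an automorphism of one, and that the claimed ``canonical identification'' must be checked to match $A|_{\Sigma_\rho}$. A short way to make this airtight, consistent with the paper's own computation, is to observe that
\begin{equation*}
A\nabla^{S,'}_{e_a}A^{-1}
= A\Bigl(\nabla'_{e_a}+\tfrac12 h^0_{ab}\,c'(e_b)c'(e_n')\Bigr)A^{-1}
= \overline\nabla_{e_a}+\tfrac12 h^0_{ab}\,c''(e_b)c''(e_n'')
= \nabla^{S,''}_{e_a},
\end{equation*}
where the last equality is exactly the paper's equation~\eqref{eq: bar2}; combined with $A\,c_S'(e_a)=c_S''(e_a)A$ (which follows from $A c'(X)=c''(AX)A$ and $Ae_n'=e_n''$), this yields $AD^{S,'}A^{-1}=D^{S,''}$. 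With that one computation supplied, your proof is complete and, in my view, a nicer way to organize the argument.
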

\begin{proof}
(The idea is the same as \cite{WY} Proposition 2.4 (modulo a minor misprint there), except we have to replace $\nabla^{\Sigma_\rho}$ by $\nabla^S$ etc. )

For $\overline \nabla_{e_a}\psi:=A\nabla_{e_a}(A^{-1}\psi)$, we have
\begin{equation}\label{eq: phi0}
\overline\nabla_{e_a}\phi_0=-\frac{\sqrt{-1}}{2}kc''(e_a)\phi_0
\end{equation}

Consider
\begin{equation}\label{eq: bar nabla}
\begin{split}
\overline \nabla_{e_a}\psi
=&e_a(\psi)+\frac{1}{2}\sum_{b<c}^{n-1}g''(\overline \nabla_{e_a} e_b , e_c)c''(e_b)c''(e_c)\psi\\
&+\frac{1}{2}\sum_{b=1}^{n-1}g''(\overline \nabla_{e_a}e_b, e_n'')c''(e_b)c''(e_n'')\psi.
\end{split}
\end{equation}
Note that $g''(\overline \nabla_{e_a} e_b , e_c)=g''(A \nabla'_{e_a} A^{-1}e_b , e_c)=g''(A \nabla'_{e_a} e_b , Ae_c)=g'( \nabla'_{e_a} e_b , e_c)=g''( \nabla''_{e_a} e_b , e_c)$ (as $g'|_{\Sigma_\rho}=g''|_{\Sigma_\rho}$). Also, $g''(\overline \nabla_{e_a}e_b, e_n'')=g''(A \nabla'_{e_a}A^{-1}e_b, Ae_n')=g'( \nabla'_{e_a}e_b, e_n')=-h_{ab}^0$.
So \eqref{eq: bar nabla} becomes
\begin{equation}\label{eq: bar2}
\begin{split}
\overline \nabla_{e_a}\psi
&=e_a(\psi)+\frac{1}{2}\sum_{b<c}^{}g''(\nabla''_{e_a} e_b , e_c)c''(e_b)c''(e_c)\psi
-\frac{1}{2}h_{ab}^0 c''(e_b)c''(e_n'')\psi\\
&=\nabla^S_{e_a}\psi
-\frac{1}{2}h_{ab}^0 c''(e_b)c''(e_n'')\psi \text{\quad by \eqref{eq: nabla}}.
\end{split}
\end{equation}
Note that by definition of $D^S$ and $c_S$,
$$D^S \psi=c_S(e_a)\nabla^S_{e_a}\psi=-c''(e_n'')c''(e_a)\nabla^S_{e_a}\psi.$$
So using \eqref{eq: bar2} and \eqref{eq: phi0},
\begin{equation*}
\begin{split}
D^S\phi_0&=-c''(e_n'')c''(e_a)\left(\overline \nabla_{e_a}\phi_0+ \frac{1}{2}h_{ab}^0 c''(e_b)c''(e_n'')\phi_0 \right)\\
&=-c''(e_n'')c''(e_a)\left(-\frac{\sqrt{-1}}{2}kc''(e_a)\phi_0+ \frac{1}{2}h_{ab}^0 c''(e_b)c''(e_n'')\phi_0 \right)\\
&=-\frac{\sqrt{-1}}{2}k(n-1)c''(e_n'')\phi_0-\frac{H_0}{2}\phi_0.
\end{split}
\end{equation*}
\end{proof}

\begin{proposition}\label{prop: lim mass}
With the assumptions in Theorem \ref{thm: PMT}, let $\phi_{a,0}'$ be a Killing spinor with respect to $g'$ and $\phi_0=A\phi_{0}'$ on $M$. Then the limit $\displaystyle \lim_{\rho \rightarrow \infty}\int_{\Sigma_\rho}(H_0-H) |\phi_{0}|_{g''}^2d\Sigma_\rho$ exists.
\end{proposition}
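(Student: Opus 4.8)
The plan is to reduce the statement to the computation already performed in the proof of Lemma \ref{lem: der}. First, since $A$ lifts to a bundle isometry $S(M,g')\to S(M,g'')$, one has $|\phi_0|_{g''}^2=|A\phi_{a,0}'|_{g''}^2=|\phi_{a,0}'|_{g'}^2$, and by Proposition \ref{prop: spinor} this equals the scalar function
$$\varphi_a:=-2k\,X\cdot\zeta_a,$$
which is a fixed linear combination of the coordinate functions $x_1,\dots,x_n,t$ of $\mathbb{R}^{n,1}$, restricted to $\mathbb{H}^n_{-k^2}\setminus\Omega_0$. In particular $\varphi_a$ is smooth there, and since Lemma \ref{lem: X} says each component of the position vector satisfies $H_0\frac{\partial X}{\partial\rho}+\Delta_\rho X-(n-1)k^2X=0$, the function $\varphi_a$ satisfies the scalar analogue $H_0\frac{\partial\varphi_a}{\partial\rho}+\Delta_\rho\varphi_a-(n-1)k^2\varphi_a=0$.

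Next I would note that the proof of Lemma \ref{lem: der} uses the particular nature of $X$ only through the identity $\Delta_\rho X-(n-1)k^2X=-H_0\frac{\partial X}{\partial\rho}$ coming from Lemma \ref{lem: X}; every other step (the evolution $\frac{\partial}{\partial\rho}d\Sigma_\rho=H_0\,d\Sigma_\rho$, the substitution of \eqref{eq: pde} for $\frac{\partial u}{\partial\rho}$, the divergence theorem, the evolution of $H_0$, and the Gauss equation \eqref{eq: gauss}) is insensitive to replacing the vector $X$ by any smooth scalar function. Hence, carrying out the same computation with $\varphi_a$ in place of $X$ gives
$$\frac{d}{d\rho}\int_{\Sigma_\rho}(H_0-H)\,\varphi_a\,d\Sigma_\rho=-\int_{\Sigma_\rho}u^{-1}(u-1)^2\Big(\big(R^\rho+(n-1)(n-2)k^2\big)\tfrac{\varphi_a}{2}+H_0\tfrac{\partial\varphi_a}{\partial\rho}\Big)\,d\Sigma_\rho.$$
Since $(H_0-H)|\phi_0|_{g''}^2=(H_0-H)\varphi_a$, it then suffices to show the right-hand side is integrable in $\rho$ over $[0,\infty)$: the limit in question will then exist and equal $\int_{\Sigma_0}(H_0-H)\varphi_a\,d\Sigma_0$ plus the convergent integral of this derivative.

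To estimate the right-hand side I would invoke Lemma \ref{lem: est} and Proposition \ref{prop: u}. By Lemma \ref{lem: est}(4), $u^{-1}(u-1)^2=O(e^{-2nk\rho})$. By the Gauss equation \eqref{eq: gauss} and Lemma \ref{lem: est}(3), $R^\rho+(n-1)(n-2)k^2=H_0^2-|A|^2=(n-1)(n-2)k^2+O(e^{-2k\rho})$ is bounded. From \eqref{eq: X rho}, $X(p,\rho)=\cosh(k\rho)X(p,0)+\frac1k\sinh(k\rho)N(p,0)$ with $X(p,0),N(p,0)$ bounded on the compact $\Sigma$, so both $\varphi_a$ and $\frac{\partial\varphi_a}{\partial\rho}$ are $O(e^{k\rho})$ uniformly in $p$; since $H_0$ is bounded, the parenthesized integrand is $O(e^{k\rho})$. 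Finally $\mathrm{Vol}(\Sigma_\rho)=O(e^{(n-1)k\rho})$ by Lemma \ref{lem: est}(2). Multiplying the decay rate of $u^{-1}(u-1)^2$, the $O(e^{k\rho})$ factor, and the volume growth gives a bound $Ce^{(-2n+1+n-1)k\rho}=Ce^{-nk\rho}$ for the absolute value of the derivative, which is integrable on $[0,\infty)$. Therefore $\rho\mapsto\int_{\Sigma_\rho}(H_0-H)|\phi_0|_{g''}^2\,d\Sigma_\rho$ is Cauchy as $\rho\to\infty$ and has a finite limit.

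The point that needs care is the first reduction: one must check that the derivation of the formula in Lemma \ref{lem: der} genuinely only used the Laplacian identity of Lemma \ref{lem: X}, so that it transfers to the scalar $\varphi_a=|\phi_0|_{g''}^2$; after that the argument is just bookkeeping of the competing exponential rates, and the decay $e^{-2nk\rho}$ of $u^{-1}(u-1)^2$ comfortably dominates the $e^{k\rho}$ growth of $\varphi_a$ together with the $e^{(n-1)k\rho}$ volume growth for every $n\ge 3$. An alternative, slightly more computational route would skip Lemma \ref{lem: der} and argue by dominated convergence directly, using that $e^{nk\rho}(H_0-H)\to(n-1)k\,v$, that $e^{-k\rho}\varphi_a$ converges pointwise on $\Sigma$ to a nonnegative function, and that $e^{-(n-1)k\rho}d\Sigma_\rho$ converges to a fixed volume form; I would, however, prefer the first route since it stays within the machinery already assembled in Section \ref{sec: prelim}.
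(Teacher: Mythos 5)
Your argument is correct but takes a genuinely different route from the paper's. The paper's proof is precisely the ``alternative'' you describe at the end and then set aside: it observes that $e^{nk\rho}(H_0-H)\to (n-1)kv$, $e^{-k\rho}X(p,\rho)\to\gamma(p)=X(p,0)+\tfrac1k N(p,0)$, and $e^{-(n-1)k\rho}d\Sigma_\rho\to d\mu$, all uniformly, and passes to the limit directly, in fact obtaining the explicit value $-2(n-1)k^2\int_\Sigma v\,(\gamma\cdot\zeta_a)\,d\mu$. Your primary route --- transferring the derivative formula of Lemma~\ref{lem: der} to the scalar $\varphi_a=-2kX\cdot\zeta_a$, justified because the proof of that lemma uses only the scalar identity $H_0\partial_\rho f+\Delta_\rho f-(n-1)k^2 f=0$, which $\varphi_a$ inherits componentwise from Lemma~\ref{lem: X}, and then bounding $\bigl|\tfrac{d}{d\rho}\int_{\Sigma_\rho}(H_0-H)\varphi_a\,d\Sigma_\rho\bigr|$ by $Ce^{-nk\rho}$ --- is valid, and the rate bookkeeping ($e^{-2nk\rho}$ from $u^{-1}(u-1)^2$, $e^{k\rho}$ from $\varphi_a$ and $\partial_\rho\varphi_a$, $e^{(n-1)k\rho}$ from the volume) indeed gives net decay $e^{-nk\rho}$, integrable for all $n\geq 3$. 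The trade-off is that the paper's method is shorter and produces the actual value of the limit, which is the more informative statement, whereas yours establishes only existence but stays entirely inside the monotonicity machinery of Section~\ref{sec: prelim}; either suffices for what the proposition asserts.
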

\begin{proof}
  $\phi_0'=\phi_{a,0}'$ as in Proposition \ref{prop: spinor}. By Proposition \ref{prop: spinor}, $|\phi_{a,0}|_{g''}^2=-2kX\cdot \zeta_a$. By \eqref{eq: X rho}, $e^{-k\rho}X(p,\rho)\rightarrow \gamma(p)= X(p,0)+\frac{1}{k}N(p,0)$.

  Also $e^{nk\rho}(H_0-H)=H_0e^{nk\rho}(1-u^{-1})\rightarrow (n-1)kv$ as given by Lemma \ref{lem: est} and Proposition \ref{prop: u}. By Lemma \ref{lem: est} again, $e^{-(n-1)k\rho}d\Sigma_\rho$ tends to a measure $d\mu$ on $\Sigma$, induced by the metric $g_\infty=\displaystyle\lim_{\rho \rightarrow \infty}e^{-2\rho}g_\rho$.
  All the above limits are uniform in $\rho$. Thus we have
  \begin{equation*}
\begin{split}
&\int_{\Sigma_\rho}(H_0-H) |\phi_{0}|_{g''}^2d\Sigma_\rho\\
=&-2k \int_{\Sigma}H_0(e^{nk\rho}(1-u^{-1}))(e^{-k\rho}X(p,\rho)\cdot \zeta_a)e^{-(n-1)k\rho}d\Sigma_\rho\\
\rightarrow & -2(n-1)k^2 \int_{\Sigma}v(\gamma\cdot \zeta_a)d\mu.
\end{split}
\end{equation*}
\end{proof}

\subsection{Proof of Theorem \ref{thm: PMT}}
Following the ideas in \cite{WY} Theorem 6.1 and Corollary 6.3, we now give the proof of Theorem \ref{thm: PMT}.
\begin{proof}[Proof of Theorem \ref{thm: PMT}]

 Define $g''=u^2d\rho^2+g_\rho$ on $M=\mathbb{H}^n_{-k^2}\setminus \Omega_0$ as in \eqref{eq: g''}, with $u$ satisfying \eqref{eq: pde}. Let $\widetilde g$ be the metric defined on $\widetilde M=M\cup_F \Omega$ such that $\widetilde g=g$ on $\Omega$ and $\widetilde g=g''$ on $M$, where $F$ is the embedding of $\Omega$ into $\mathbb{H}_{-k^2}^n$. Note that $\tilde g$ is Lipschitz near $\partial \Omega$, i.e. there is a smooth coordinates around $\partial \Omega$ such that the coefficients $\widetilde g_{ij}$ are Lipschitz.

 Let $\widehat \nabla_V=\widetilde \nabla _V +\frac{\sqrt{-1}}{2} k \widetilde c(V)$ and $\widehat D=\widetilde c(e_i) \widehat \nabla _{e_i}$ be the Killing connection and Killing-Dirac operator associated with $\widetilde g$ respectively. (All inner products and norms in this proof are taken with respect to $\tilde g$ unless otherwise stated. )

Let $\phi_0'$ be a Killing spinor on $\mathbb{H}^n_{-k^2}$ and $\phi_0=A\phi_0'$ on $M$, we claim that there is a (Killing harmonic) spinor $\phi$ with $\widehat D\phi =0$ on $\widetilde M$ such that
\begin{equation}\label{eq: killing harmonic}
0\leq \lim_{m\rightarrow\infty} \int_{\Sigma_{\rho_m}} \langle \phi, \widehat B\phi\rangle
=\lim_{m\rightarrow\infty} \int_{\Sigma_{\rho_m}} \langle \phi_0, \widehat B\phi_0\rangle
\end{equation}
where $\rho_m\rightarrow \infty$ and $\widehat B$ is the boundary operator with respect to $\tilde g$ as in \eqref{eq: operators}.

  Since we are only interested in the asymptotic behavior, by cutting off, we can assume that $\phi_0$ can be extended smoothly on the whole $\widetilde M$. Then outside a compact set, for $\overline \nabla=A\nabla A^{-1}$, we have
  $$
  \begin{cases}
    \overline \nabla_{e_a} \phi_0=-\frac{\sqrt{-1}}{2} \widetilde c(e_a)\phi_0,\\
    \overline \nabla _{\frac \partial {\partial\rho}}\phi_0=-\frac{\sqrt{-1}}{2}\frac{1}{u}\widetilde c(\frac\partial{\partial\rho}) \phi_0.
  \end{cases}
  $$

So
  $$
  \begin{cases}
    \widehat \nabla_{e_a} \phi_0=\widetilde \nabla_{e_a} \phi_0+\frac{\sqrt{-1}}{2} \widetilde c(e_a)\phi_0= (\widetilde \nabla_{e_a}- \overline \nabla_{e_a})\phi_0,\\
    \widehat \nabla _{\frac\partial{\partial\rho}}\phi_0=\widetilde \nabla_{\frac\partial{\partial\rho}} \phi_0+\frac{\sqrt{-1}}{2}\widetilde c(\frac\partial{\partial\rho}) \phi_0=(\widetilde \nabla_{\frac\partial{\partial\rho}}- u \overline \nabla _{\frac\partial{\partial\rho}})\phi_0.
  \end{cases}
  $$
  By the estimates in Lemma 2.1 of \cite{andersson1998scalar}, we have
  $$ |(\widetilde \nabla -\overline \nabla )\psi|\leq C|A^{-1}||\nabla'A||\psi | .$$
By Proposition \ref{prop: u}, $|A^{-1}||\nabla'A|=O(e^{-nk\rho})$. Also $|\phi_0|^2= O(e^{k\rho})$ by Proposition \ref{prop: spinor}, so
$|\widehat \nabla \phi_0|=O(e^{-(n-\frac{1}{2})k \rho})$. By Lemma \ref{lem: est}, the volume element of $(\widetilde M, \widetilde g)$ is of order $e^{(n-1)k\rho}$. We then have $\widehat \nabla \phi_0$, and therefore $\widehat D\phi_0$, are both in $L^2(\widetilde M, \widetilde g)$.

We now find $\phi_1\in W^{1,2}$ such that $\widehat D \phi_1=\widehat D \phi_0$ as follows.
Define a linear map on $W^{1,2}$ by
$l(\psi)=\int_{\widetilde M} \langle \widehat D \psi , \widehat D\phi_0\rangle$ and the sesquilinear form $B$ on $W^{1,2}$ by
$B(\psi , \varphi)=
\int_{\widetilde M} \langle \widehat D \psi, \widehat D \varphi\rangle$.
We claim that $B$ is bounded and coercive.

Let $\widetilde M_\rho$ be the region in $\widetilde M$ bounded by $\Sigma_\rho$ and let $\psi, \varphi\in C_c^\infty$.
On $\widetilde M_{\rho}\setminus \Omega$, $R=-n(n-1)$, so by the Lichnerowicz formula \eqref{eq: Lichnerowicz}, Proposition \ref{prop: bound} and the definition of $\widehat B$,
\begin{equation*}
\begin{split}
  \int_{\widetilde M_{\rho}\setminus \Omega}
(\langle\widehat \nabla \psi, \widehat \nabla \varphi\rangle-\langle\widehat D \psi, \widehat D \varphi\rangle)
=& \int_{\partial \Omega}
\langle \psi , (D^S +\frac{H}{2}
+ \sqrt{-1}c''(\nu))\varphi\rangle\\
&+ \int _{\Sigma_{\rho}}\langle \psi, (\nabla _\nu+c''(\nu) \widehat D)\varphi\rangle.
\end{split}
\end{equation*}
On $\Omega\subset \widetilde M_\rho$,
\begin{equation*}
  \begin{split}
 &    \int_{\Omega}
(\langle\widehat \nabla \psi, \widehat \nabla \varphi\rangle-\langle\widehat D \psi, \widehat D \varphi\rangle +\frac{1}{4}(R+n(n-1))\langle \psi, \varphi\rangle )\\
=& \int_{\partial \Omega}\langle \psi , -(D^S +\frac{H}{2} +\sqrt{-1}c''(\nu))\varphi\rangle.
  \end{split}
\end{equation*}
To be precise, $H$ in the two equations above are the mean curvatures of $\partial \Omega$ with respect to $g''$ and $g$ respectively, but since they agree (\eqref{eq: pde}, \eqref{eq: H}), so adding them up gives
$$
\int_{\widetilde M_{\rho}}
(\langle\widehat \nabla \psi, \widehat \nabla \varphi\rangle-\langle\widehat D \psi, \widehat D \varphi\rangle +\frac{1}{4}(R+n(n-1))\langle \psi, \varphi\rangle )
= \int_{\Sigma_\rho}
\langle \psi, (\nabla _\nu+c''(\nu) \widehat D)\varphi\rangle.
$$
As $R=-n(n-1)$ outside $\Omega$, so
\begin{equation*}
\begin{split}
B(\psi, \varphi)
= \int_{\widetilde M_{}} \langle \widehat D\psi, \widehat D\varphi \rangle
&= \int_{\widetilde M_{}} \left(\langle\widehat \nabla \psi,\widehat \nabla \varphi\rangle+ \frac{1}{4} (R+n(n-1)) \langle\psi ,\varphi\rangle \right)\\
&\leq C \|\psi\|_{W^{1,2}}\|\varphi\|_{W^{1,2}}.
\end{split}
\end{equation*}
So $B$ is bounded on $W^{1,2}$. On the other hand, as $R\geq -n(n-1)$, for $\psi\in C^{\infty}_c$,
\begin{equation*}
\begin{split}
\int_{\widetilde M} |\widehat D \psi|^2
\geq \int_{\widetilde M} |\widehat \nabla \psi|^2
&= \int_{\widetilde M} ( |\nabla \psi|^2 +\frac{n|\psi|^2}{4}  +\frac{\sqrt{-1}}{2}
(\langle D\psi, \psi\rangle -\langle \psi, D\psi\rangle ))\\
&= \int_{\widetilde M}(|\nabla \psi|^2 +\frac{n|\psi|^2}{4} ) \geq C\|\psi\|_{W^{1,2}}^2.
\end{split}
 \end{equation*}
So $B$ is also coercive. Then by Lax-Milgram theorem, there exists $\phi_1\in W^{1,2}$ such that $B(\phi_1, \psi)=l(\psi)$ for all $\psi \in W^{1,2}$. i.e.
$
\int_{\widetilde M} \langle \widehat D (\phi_1-\phi_0), \widehat \psi\rangle =0
$.
Let $\phi=\phi_1-\phi_0$ and define $\beta =\widehat D \phi$, so we have
$\int_{\widetilde M} \langle \beta, \widehat D\psi \rangle =0 $ for all $\psi \in W^{1,2}$.
This implies $\widehat D\beta =-n\sqrt{-1} \beta$ weakly, as ${\widehat D }^*=\widehat D+n\sqrt{-1}$.

As argued in \cite{shi2002positive} Lemma 3.3, $\beta \in W^{1,2}_{\textrm{loc}}$. Note also that in the weak sense, $\widehat D \beta =-n\sqrt{-1}\beta=-n\sqrt{-1}(\widehat D\phi_1-\widehat D\phi_0)\in L^2$.
Then
\begin{equation*}
\begin{split}
  \int_{\widetilde M_\rho}\langle \widehat  D\beta, \widehat D\beta \rangle
  &=\int_{\widetilde M_\rho}\langle (\widehat D+n\sqrt{-1})\widehat D \beta,  D\beta \rangle
  -\int_{\Sigma _\rho} \langle \widetilde c(\nu) \widehat D \beta, \widehat D\beta\rangle\\
  &=\int_{\widetilde M_\rho}\langle \widehat D (\widehat D+n\sqrt{-1})\beta,  \beta \rangle
  -\int_{\Sigma _\rho} \langle \widetilde c(\nu) \widehat D \beta, \widehat D\beta\rangle\\
  &=
  -\int_{\Sigma _\rho} \langle \widetilde c(\nu) \widehat D \beta, \widehat D\beta\rangle
  \leq \int_{\Sigma _\rho} |\widehat D \beta|^2.
  \end{split}
\end{equation*}
As $\displaystyle\int_{\widetilde M} |\widehat D \beta|^2<\infty$, there is a sequence $\rho_m\rightarrow \infty$ such that
$ \int _{\Sigma_{\rho_m}}|\widehat D \beta|^2\rightarrow 0$.
But then
$$\int_{\widetilde M}|\widehat D\beta |^2 =\lim_{m\rightarrow \infty} \int_{\widetilde M_{\rho_m}} |\widehat D\beta|^2
\leq \lim_{m \rightarrow \infty}  \int _{\Sigma_{\rho_m}}|\widehat D\beta|^2\rightarrow 0.$$
i.e. $\widehat D \beta=0$. As $\widehat D \beta =-n\sqrt{-1}\beta$, we have $\widehat D\phi=\beta =0$.
Now,
by the Lichnerowicz formula \eqref{eq: Lichnerowicz}, as $\widehat D\phi=0$,
\begin{equation*}
\begin{split}
  0&\leq
  \int_{\widetilde M_\rho} \left(|\widehat \nabla \phi|^2+\frac{1}{4} (R+n(n-1)) |\phi|^2\right)
  = \int_{\Sigma_\rho}\langle \widehat B \phi, \phi\rangle\\
  &=\int_{\Sigma_\rho}\langle \widehat B (\phi_1-\phi_0), \phi_1-\phi_0\rangle\\
  &=\int_{\Sigma_\rho}\langle \widehat B \phi_0, \phi_0\rangle
  +\left(\int_{\Sigma_\rho}\langle \widehat B \phi_1, \phi_1\rangle
  -\int_{\Sigma_\rho}\langle \widehat B \phi_0, \phi_1\rangle
  -\int_{\Sigma_\rho}\langle \widehat B \phi_1, \phi_0\rangle\right).
  \end{split}
\end{equation*}

We claim that there is $\rho_m\rightarrow \infty$ such that the three terms in the bracket above will tend to zero as $m\rightarrow \infty$. Consider
\begin{equation*}
\begin{split}
\int_{\Sigma_\rho}\langle \widehat B \phi_0, \phi_1\rangle
&=\int_{\Sigma_\rho}\langle (\widehat \nabla_\nu+ \widetilde c(\nu)\widehat D) \phi_0, \phi_1\rangle \\
&\leq (\int_{\Sigma_\rho}|\widehat \nabla_\nu \phi_0|^2)^{\frac{1}{2}}(\int_{\Sigma_\rho}|\phi_1|^2)^{\frac{1}{2}}+
(\int_{\Sigma_\rho}| \widehat D\phi_0|^2)^{\frac{1}{2}}(\int_{\Sigma_\rho}|\phi_1|^2)^{\frac{1}{2}}.
\end{split}
\end{equation*}
As $\displaystyle\int_{\widetilde M} |\widehat \nabla \phi_0|^2$, $\displaystyle\int_{\widetilde M} |\widehat D\phi_0|^2$ and $\displaystyle\int_{\widetilde M} | \phi_1|^2$ are all finite, there is $\rho_m\rightarrow \infty$ such that $\displaystyle\int_{\Sigma_{\rho_m}}\langle \widehat B \phi_0, \phi_1\rangle \rightarrow 0$.
Similarly, as $\displaystyle\int_{\widetilde M}|\widehat \phi _1|^2<\infty$, we can also assume that
$\displaystyle\int_{\Sigma_{\rho_m}}\langle \widehat B \phi_1, \phi_1\rangle \rightarrow 0$. \eqref{eq: killing harmonic} is proved.

Now, by Proposition \ref{prop: bound} and \ref{prop: D^S},
$$
0\leq \lim_{m\rightarrow \infty} \int_{\Sigma_{\rho_m}} \langle \widehat B \phi, \phi\rangle
=\lim _{m\rightarrow \infty} \int_{\Sigma_{\rho_m}} \langle \widehat B \phi_0, \phi_0\rangle
=\lim _{m\rightarrow \infty} \frac{1}{2}\int_{\Sigma_{\rho_m}} (H_0-H)|\phi_0|_{\widetilde g}^2.
$$
By Proposition \ref{prop: lim mass}, $\displaystyle \lim_{\rho\rightarrow\infty}\frac{1}{2}\int_{\Sigma_\rho}(H_0-H)|\phi_0|^2_{\widetilde g}$ exists, therefore
$$\lim_{\rho\rightarrow\infty}\frac{1}{2}\int_{\Sigma_\rho}(H_0-H)|\phi_0|^2_{\widetilde g}\geq0.$$

 To finish the proof, by Proposition \ref{prop: null}, we can let $\zeta=\zeta_a$. Let $\phi_{a, 0}'$ be the corresponding Killing spinor on $\mathbb{H}^n_{-k^2}$ and $\phi_{a,0}=A\phi'_{a,0}$ outside $\Omega$. By Proposition \ref{prop: spinor},  $|\phi_{a,0}|_{\tilde g}^2=|\phi'_{a,0}|_{g'}^2=-2kX\cdot \zeta_a$. So the above argument shows that
 $$-k \lim_{\rho\rightarrow \infty} \int_{\Sigma_\rho}(H_0-H)X\cdot \zeta_a\geq 0.$$
 In other words, $\displaystyle\lim_{\rho\rightarrow\infty} \int_{\Sigma_\rho}(H_0-H)X d\Sigma_\rho$ is a future-directed non-spacelike vector.
\end{proof}

\section{Positivity of the quasi-local mass of Shi-Tam}\label{sec: thm}
Now assume $n\geq 3$ and let $(\Omega , g)$ be as described in section \ref{sec: prelim}. Recall that  $B_0(R_1)$ and $B_0(R_2)$ are geodesic balls in $\mathbb{H}^n_{-k^2}$ such that $B_0(R_1)\subset \Omega_0\subset B_0(R_2)$. Our main result is the following
\begin{theorem}(cf. \cite{ShiTam2007} Theorem 3.1)\label{thm: m<0}
Let $n\geq 3$ and $(\Omega, g)$ be a compact spin $n$-manifold with smooth boundary $\Sigma$. Assuming the following conditions:
\begin{enumerate}
\item
The scalar curvature $R$ of $(\Omega, g)$ satisfies $R\ge -n(n-1) k^2$ for some $k>0$,
\item
$\Sigma$ is topologically a $(n-1)$-sphere with sectional curvature $K>-k^2$, mean curvature $H>0$ and $\Sigma$ can be isometrically embedded uniquely into $\mathbb{H}_{-k^2}^n$ with mean curvature $H_0$.
\end{enumerate}
Then for any future directed null vector $\zeta$ in $\mathbb{R}^{n, 1}$,
\begin{equation*}
m(\Omega, \zeta)=\int_{\Sigma}(H_0-H) W \cdot \zeta\leq 0
\end{equation*}
where
$W=(x_1, x_2, \cdots, x_n, \alpha t)$ with
$$1<\alpha=\coth kR_1+\frac{1}{\sinh kR_1}\left(\frac{\sinh^2 kR_2}{\sinh ^2 k R_1}-1 \right)^\frac{1}{2},$$
$X=(x_1, x_2, \cdots, x_n,  t)$ is the position vector in $\mathbb R^{n, 1}$ and the inner product is given by the Lorentz metric.
\end{theorem}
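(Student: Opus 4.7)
The plan is to combine a monotonicity argument along the foliation $\{\Sigma_\rho\}$ in $\mathbb{H}^n_{-k^2}$ with Theorem \ref{thm: PMT} and Corollary \ref{corollary} to control the mass integral at infinity. Since Lemma \ref{lem: W} gives $W$ the same equation as $X$ in Lemma \ref{lem: X}, the derivation of Lemma \ref{lem: der} goes through verbatim with $X$ replaced by the scalar $W\cdot\zeta$, yielding
\begin{equation*}
\begin{split}
&\frac{d}{d\rho}\int_{\Sigma_\rho}(H_0-H)\,W\cdot\zeta\, d\Sigma_\rho\\
=&-\int_{\Sigma_\rho}u^{-1}(u-1)^2\left[\frac{R^\rho+(n-1)(n-2)k^2}{2}\,W\cdot\zeta+H_0\,\frac{\partial W}{\partial\rho}\cdot\zeta\right]d\Sigma_\rho.
\end{split}
\end{equation*}

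I would then verify that the bracket is pointwise $\le 0$, which forces the above derivative to be nonnegative. Three ingredients are needed. First, $W\cdot\zeta<0$: since $X\in\mathbb{H}^n_{-k^2}$ is future-timelike and $\zeta$ is future-directed null one has $X\cdot\zeta<0$, and $W-X=(0,\dots,0,(\alpha-1)t)$ contributes $-(\alpha-1)t\zeta^0<0$. Second, $R^\rho+(n-1)(n-2)k^2\ge 0$: by the Gauss equation \eqref{eq: gauss} this equals $H_0^2-|A|^2=2\sum_{a<b}\lambda_a\lambda_b$, and the hypothesis $K>-k^2$ combined with $H>0$ forces every principal curvature $\lambda_a(p,0)$ of $\Sigma$ in $\mathbb{H}^n_{-k^2}$ to be positive; the explicit formulas in Lemma \ref{lem: est}\eqref{item: 1} then show each $\lambda_a(p,\rho)$ stays positive under the parallel flow. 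Third and most delicate, $\frac{\partial W}{\partial\rho}\cdot\zeta\le 0$: writing $N=\partial X/\partial\rho$ for the hyperbolic unit outward normal and $(\vec N,N^t)$ for its spatial/time decomposition, one computes $\frac{\partial W}{\partial\rho}\cdot\zeta=\vec N\cdot\vec\zeta-\alpha N^t\zeta^0$, and Cauchy--Schwarz together with $|\vec N|^2=1+(N^t)^2$ and $|\vec\zeta|=\zeta^0$ reduces this to the pointwise bound $N^t\ge 1/\sqrt{\alpha^2-1}$ on $\mathbb{H}^n_{-k^2}\setminus\Omega_0$.

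Granted the three sign conditions, the monotonicity inequality gives $\int_\Sigma(H_0-H)W\cdot\zeta\le\lim_{\rho\to\infty}\int_{\Sigma_\rho}(H_0-H)W\cdot\zeta$. I would then decompose
\begin{equation*}
\int_{\Sigma_\rho}(H_0-H)W\cdot\zeta=\int_{\Sigma_\rho}(H_0-H)X\cdot\zeta-(\alpha-1)\zeta^0\int_{\Sigma_\rho}(H_0-H)\,t
\end{equation*}
and pass to the limit. Theorem \ref{thm: PMT} gives the first term a nonpositive limit, while $t=\cosh(kr)/k$ together with Corollary \ref{corollary} yields $\lim_{\rho\to\infty}\int_{\Sigma_\rho}(H_0-H)t\ge 0$; since $(\alpha-1)\zeta^0>0$, the correction term is nonpositive. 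Hence $\lim_{\rho\to\infty}\int_{\Sigma_\rho}(H_0-H)W\cdot\zeta\le 0$, and $m(\Omega,\zeta)\le 0$ follows.

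The main obstacle is establishing the pointwise normal bound $N^t\ge 1/\sqrt{\alpha^2-1}$ underlying the third sign. Differentiating \eqref{eq: X rho} gives $N^t(p,\rho)=k\sinh(k\rho)\,t(p,0)+\cosh(k\rho)\,N^t(p,0)$, so the $\rho\ge 0$ estimate reduces to an inequality on $\Sigma_0$ that must be extracted from the geometric constraint $B_0(R_1)\subset\Omega_0\subset B_0(R_2)$ via the relation between the position vector and the normal at $\Sigma_0$. The prescribed value of $\alpha$ is calibrated exactly for this purpose, as is transparent in the concentric-sphere model where $\Sigma$ is the geodesic sphere of radius $R$ at $o$: then $N^t(p,\rho)=\sinh(k(R+\rho))\ge\sinh(kR)=1/\sqrt{\coth^2(kR)-1}$, which is the formula for $\alpha$ in this degenerate case $R_1=R_2=R$. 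The general estimate is the higher-dimensional counterpart of the computation carried out by Shi--Tam in \cite{ShiTam2007}.
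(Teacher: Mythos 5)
Your overall route matches the paper's: you derive the monotonicity of $\int_{\Sigma_\rho}(H_0-H)\,W\cdot\zeta\,d\Sigma_\rho$ from the analogue of Lemma \ref{lem: der} (valid since Lemma \ref{lem: W} gives $W$ the same evolution as $X$), then control the limit at infinity with Theorem \ref{thm: PMT} for the $X\cdot\zeta$ contribution and Corollary \ref{corollary} for the $(\alpha-1)$ correction involving $t=\cosh(kr)/k$. Your explicit decomposition of $\int_{\Sigma_\rho}(H_0-H)W\cdot\zeta$ into those two pieces is exactly what the paper invokes in one line. Your treatment of the first two sign conditions ($W\cdot\zeta<0$ via $\phi\sinh kr-\alpha\cosh kr<0$, and $R^\rho+(n-1)(n-2)k^2=H_0^2-|A|^2\ge 0$ via the positivity of the principal curvatures of the parallel surfaces in Lemma \ref{lem: est}\eqref{item: 1}) also agrees with the paper.

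The difference is in the third sign. The paper bounds $|\phi|\le 1$ and $|\partial\phi/\partial\rho|\le\mu k\,\partial r/\partial\rho$ separately (Lemma \ref{lem: phi}) and inserts both into the expression for $\partial W/\partial\rho\cdot\zeta$; you instead apply Cauchy--Schwarz to the spatial part $\vec N$ of the normal, using $|\vec N|^2=1+(N^t)^2$ and $|\vec\zeta|=\zeta^0$, reducing everything to the single pointwise inequality $N^t\ge 1/\sqrt{\alpha^2-1}$ on $\mathbb{H}^n_{-k^2}\setminus\Omega_0$. This is a genuinely cleaner formulation: Cauchy--Schwarz is sharp exactly where the paper's product of two separate bounds is not, and it makes the role of $\alpha$ completely transparent. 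Your further reduction to $\rho=0$ via $N^t(p,\rho)=k\sinh(k\rho)\,t(p,0)+\cosh(k\rho)\,N^t(p,0)$ is also correct.

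However, you have not proved the normal bound; you only verify the concentric case $R_1=R_2$ and gesture at Shi--Tam. This bound is exactly the geometrically hard content of the theorem: it is equivalent to the paper's Lemma \ref{lem: phi}, part \eqref{ineq: 2}, $\partial r/\partial\rho\ge\sinh(kR_1)/\sinh(kR_2)$ (combined with $r\ge R_1$, $N^t=\sinh(kr)\,\partial r/\partial\rho$, and the algebraic check that $\sinh^2(kR_1)/\sinh(kR_2)\ge 1/\sqrt{\alpha^2-1}$, which holds since $\alpha^2-1=(\sinh^2 kR_2+2\cosh kR_1\sinh kR_1\sqrt{\sinh^2 kR_2-\sinh^2 kR_1})/\sinh^4 kR_1$). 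Establishing it requires the do Carmo--Warner convexity theorem to show $\Sigma_0$ encloses a geodesically convex domain, so that $\partial r/\partial\rho$ is nondecreasing along outward normal geodesics and the hyperbolic law of cosines/sines configuration in the paper's proof applies, followed by the explicit trigonometric estimate. Until you supply this step, i.e.\ a proof of the analogue of Lemma \ref{lem: phi} adapted to your normal-bound framing, the proposal has a genuine gap at its core.
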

Let $(\phi_1, \cdots, \phi_n)$ denote the position vectors of points of $\mathbb{S}^{n-1}$ in $\mathbb{R}^n$. Let $\{\Sigma_\rho\}$ be the foliation of $\mathbb{H}_{-k^2}^n\setminus \Omega_0$ described in section \ref{sec: prelim}. We need the following:
\begin{lemma}\label{lem: phi}(cf. \cite{ShiTam2007} Lemma 3.3)
With the assumptions in Theorem \ref{thm: m<0}, let $(y_1, \cdots, y_n)\in \mathbb{R}^n$ such that
$\displaystyle\sum_{i=1}^n y_i^2=1$. Let $\displaystyle\phi=\sum_{i=1}^n \phi_i y_i$. Then for $\rho>0$,
\begin{enumerate}
  \item
  \begin{equation}\label{ineq: 1}
(\frac{\partial\phi}{\partial\rho})^2\leq (1-\phi^2)k^2 \sinh^{-2}kr \left( 1-(\frac{\partial r}{\partial \rho})^2\right).
\end{equation}
\item
\begin{equation}\label{ineq: 2}
  \frac{\partial r}{\partial \rho}\ge \frac{\sinh kR_1}{\sinh k R_2}.
\end{equation}
\end{enumerate}
Hence we have
\begin{equation}\label{eq: ineq}
\left|\frac{\partial \phi}{\partial \rho}\right|\leq \mu k \frac{\partial r}{\partial \rho}\textrm{\quad where
$\mu= \frac{1}{\sinh kR_1}\left(\frac{\sinh^2 kR_2}{\sinh ^2 k R_1}-1 \right)^\frac{1}{2}$.}
\end{equation}
\end{lemma}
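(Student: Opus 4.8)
The plan is to establish the two parts of Lemma~\ref{lem: phi} separately and then combine them mechanically to get \eqref{eq: ineq}. For part (1), I would work on a fixed leaf $\Sigma_\rho$ and exploit that $\phi = \sum_i \phi_i y_i$ is, up to the constant vector $(y_1,\dots,y_n)$, just a component of the Gauss map $Y\colon \Sigma_\rho \to \mathbb{S}^{n-1}$ appearing in the parametrization \eqref{eq: spherical}, $X = \tfrac{1}{k}(\sinh(kr)Y,\cosh kr)$. The key point is a pointwise inequality: if I decompose $\nabla^{\mathbb{H}^n}r$ (a unit vector) into its part along $\partial/\partial\rho$ and its part tangent to $\Sigma_\rho$, then $|\nabla^{\Sigma_\rho} r|^2 = 1-(\partial r/\partial\rho)^2$. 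I would then bound $(\partial\phi/\partial\rho)^2$ by $|\nabla^{\mathbb{H}^n}\phi|^2$ restricted appropriately, and use that $\phi$ is an eigenfunction-type quantity on the round sphere: since $\phi_i y_i$ restricted to the geodesic sphere of radius $r$ around $o$ satisfies $|\nabla \phi|^2 \le (1-\phi^2)k^2\sinh^{-2}(kr)$ (this is the standard gradient estimate for a first spherical harmonic, scaled by the warping factor $\sinh(kr)/k$ of the hyperbolic metric). Combining the two observations — that the $\partial/\partial\rho$-derivative only sees the radial-in-$\rho$ part, while the sphere estimate controls the full gradient — yields \eqref{ineq: 1}. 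This is essentially the argument of \cite{ShiTam2007} Lemma 3.3 and I would cite it while supplying the dimensional bookkeeping.

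For part (2), the inequality $\partial r/\partial\rho \ge \sinh(kR_1)/\sinh(kR_2)$ is a comparison statement: as one flows outward by the equidistant foliation $\Sigma_\rho$ starting from $\Sigma_0$, the geodesic distance $r$ from the fixed interior point $o$ grows at least as fast as it would along the "slowest" configuration, which is governed by the extreme radii $R_1 \le$ (distance from $o$ to $\Sigma_0$) and $R_2$. Concretely, $\partial r/\partial\rho = \langle \nabla^{\mathbb{H}^n}r, \partial/\partial\rho\rangle = \cos\theta$ where $\theta$ is the angle between the geodesic from $o$ and the outward normal direction of the flow. I would use the first variation / the structure of equidistant hypersurfaces in $\mathbb{H}^n_{-k^2}$: along a fixed $o$-geodesic ray, the function $\sinh(kr)$ evolves linearly enough that the ratio is pinched by $\sinh(kR_1)/\sinh(kR_2)$; since $\Sigma_0 \subset B_0(R_2)$, every point of $\Sigma_0$ has $r \le R_2$, and since $B_0(R_1)\subset\Omega_0$, the normal never gets "too transverse" to the radial direction. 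The bound in \eqref{ineq: 2} follows. Again this mirrors \cite{ShiTam2007} Lemma 3.3 \eqref{ineq: 2}.

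Finally, for \eqref{eq: ineq} I simply chain the two estimates: from \eqref{ineq: 1}, $|\partial\phi/\partial\rho| \le \sqrt{1-\phi^2}\, k\sinh^{-1}(kr)\sqrt{1-(\partial r/\partial\rho)^2} \le k\sinh^{-1}(kr)\sqrt{1-(\partial r/\partial\rho)^2}$, and then using \eqref{ineq: 2} together with $r \ge R_1$ (so $\sinh(kr)\ge\sinh(kR_1)$) and the elementary bound $\sqrt{1-t^2}/t \le \sqrt{(1/t_0^2)-1}$ for $t\ge t_0 = \sinh(kR_1)/\sinh(kR_2)$, one obtains $|\partial\phi/\partial\rho| \le \mu k\,(\partial r/\partial\rho)$ with $\mu$ as stated. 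I expect the main obstacle to be part (2): the first variation argument for $\partial r/\partial\rho$ requires care about which geodesic through $o$ one differentiates along and how the convexity hypotheses ($B_0(R_1)\subset\Omega_0\subset B_0(R_2)$, together with $H>0$ ensuring the flow stays well-behaved) enter, whereas part (1) is a clean Cauchy–Schwarz plus spherical-harmonic gradient estimate.
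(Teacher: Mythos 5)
For part (1) your argument is correct and is essentially the paper's computation in disguise. The paper parametrizes $\mathbb{H}^n_{-k^2}$ in polar coordinates about $o$ as $X=\tfrac{1}{k}(\sinh kr\cos\theta,\sinh kr\sin\theta\,\vec z,\cosh kr)$ with $\phi=\cos\theta$, equates the two expressions
\[
g'=d\rho^2+g_\rho=dr^2+k^{-2}\sinh^2(kr)\bigl(d\theta^2+\sin^2\theta\,d\sigma\bigr)
\]
and then reads off \eqref{ineq: 1} from $|\partial/\partial\rho|^2_{g'}=1$. Your coordinate-free version --- decompose the unit vector $\partial/\partial\rho$ into a radial part $\langle\partial_\rho,\nabla r\rangle\nabla r$ and a part $V$ tangent to the geodesic sphere with $|V|^2=1-(\partial r/\partial\rho)^2$, use that $\nabla\phi\perp\nabla r$, and apply the first spherical-harmonic gradient bound $|\nabla\phi|^2=k^2\sinh^{-2}(kr)(1-\phi^2)$ --- is the same inequality obtained via Cauchy--Schwarz rather than via explicit coordinates. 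Your derivation of \eqref{eq: ineq} from (1) and (2) is also correct and actually makes explicit the elementary algebra that the paper leaves to the reader: $|\phi|\le1$, $\sinh kr\ge\sinh kR_1$, and that $\sqrt{1-t^2}/t$ is decreasing on $(0,1]$ evaluated at $t_0=\sinh kR_1/\sinh kR_2$.

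Part (2) is a genuine gap in your proposal, and you yourself flag it as the place you cannot complete the argument. What you write --- ``the first variation argument requires care,'' ``$\sinh(kr)$ evolves linearly enough that the ratio is pinched,'' ``the normal never gets too transverse to the radial direction'' --- is a statement of plausibility, not a proof, and the one concrete observation you do give ($\partial r/\partial\rho=\cos\theta$) is only the starting point. The paper's actual argument has three specific steps, none of which appear in your sketch. First, one invokes the do Carmo--Warner convexity theorem \cite{do1970rigidity}: because $\Sigma_0$ has sectional curvature $>-k^2$, the region $\Omega_0$ it bounds is geodesically convex, so the outward normal geodesic $\gamma$ through $p\in\Sigma_0$ satisfies $qx=qp+\rho$, where $q$ is the foot of the perpendicular from $o$ to $\gamma$. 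Second, the hyperbolic Pythagoras/cosine law in the right triangle $oq\,x$ gives $\cosh(kr)=\cosh(ka)\cosh(k\cdot qx)$ with $a=d(o,\gamma)$, whence $\partial r/\partial\rho=\cosh(ka)\sinh(k\cdot qx)/\sinh(kr)=\cosh(ka)\sin(\angle xoq)$, which is non-decreasing in $\rho$; so the estimate reduces to $\rho=0$. Third, at $p\in\Sigma_0$ one constructs the geodesic $\beta\perp\gamma$ through $p$ in the plane of the triangle $oqp$, lets $y$ be its intersection with $\partial B_0(R_2)$, and applies the hyperbolic sine law in triangle $opy$ to get $\partial r/\partial\rho(p)=\sin(\angle opy)=\frac{\sinh kR_2}{\sinh(k\cdot op)}\sin(\angle oyp)\ge\sin(\angle oyp)\ge\frac{\sinh kR_1}{\sinh kR_2}$, using $op\le R_2$ and $R_1\le oy$-geometry. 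Without the convexity input and the two hyperbolic-trigonometric steps, \eqref{ineq: 2} does not follow from what you wrote; you would need to supply them (or an equivalent comparison argument) for the proof to stand.
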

\begin{proof}
 The position vectors in $\mathbb{R}^{n, 1}$ can be parametrized by
$$X=\frac{1}{k}(\sinh kr \cos \theta, \sinh kr \sin \theta \vec z, \cosh kr)$$
where $\vec z\in \mathbb{S}^{n-2}\subset \mathbb{R}^{n-1}$. Then the hyperbolic metric (outside $\Omega_0$) is
$$g'=d\rho^2+ g_\rho=dr^2+ k^{-2} \sinh^2 r (d\theta^2+\sin ^2 \theta d\sigma)$$
where $d\sigma $ is the standard metric on $\mathbb{S}^{n-2}$.
Compute $g'(\frac{\partial }{\partial \rho}, \frac{\partial }{\partial \rho})$ in the above two forms of $g'$, we have
\begin{equation*}
  \begin{split}
1&=(\frac{\partial r}{\partial \rho})^2+k^{-2} \sinh ^2 kr\left((\frac{\partial \theta}{\partial \rho})^2+\sin^2 \theta d\sigma (\frac{\partial }{\partial \rho}, \frac{\partial }{\partial \rho})\right)\\
&\geq
(\frac{\partial r}{\partial \rho})^2+k^{-2} \sinh ^2 kr(\frac{\partial \theta}{\partial \rho})^2.
  \end{split}
\end{equation*}
Since $\phi=\cos \theta$, \eqref{ineq: 1} follows by multiplying $\sin ^2 \theta$ to the above.

Recall that $o\in \Omega_0$ and $r$ is the geodesic distance from $o$.
Let $p\in \Sigma $ and let $\gamma$ be the (arc-length parametrized) outward pointing geodesic through $p$ which is orthogonal to $\Sigma$.
Let $q$ be the point on $\gamma$ such that $a=d(o, q)=d(o, \gamma)$. We will simple denote by $yz$ to be the distance $d(y,z)$ between two points $y$ and $z$. Then for $x=\gamma (\rho)$, $\rho>0$, cosine law gives $\cosh (k\,r)=\cosh( k\,a) \cosh( k\cdot qx)$, where $r=ox$. By \cite{do1970rigidity}, the region bounded by $\Sigma_0$ is geodesically convex, so we have $qx=qp+\rho$. Thus along $\gamma$ we have $$\frac{\partial r}{\partial \rho}= \frac{\cosh (k\,a )\sinh (k \cdot qx)}{ \sinh ( k \,r)}>0.$$
Also, as $\frac{\sinh (k\cdot qx)}{\sinh (k \,r)}=\sin (\angle xoq)$ if $o\ne q$, we see that $\frac{\partial r}{\partial \rho}$ is non-decreasing along $\gamma$ for $\rho>0$, which is also true if $o=q$. We now estimate $\frac{\partial r}{\partial \rho}(p)$. To show \eqref{ineq: 2}, we can assume $o\ne q$, for otherwise $\frac{\partial r}{\partial \rho}=1\ge \frac{\sinh kR_1}{\sinh kR_2}$.

Let $\beta$ be the geodesic on the hyperbolic plane  containing the triangle $\triangle oqp$ such that it is perpendicular to $\gamma$ at $p$. Suppose $\beta$ meets $\partial B_0(R_2)$ at $y$ and $z$. Then $\frac{\partial r}{\partial \rho}=\sin (\angle opy)$. By sine law,
$$\frac{\partial r}{\partial \rho}= \sin (\angle opy)= \frac{\sinh k R_2}{\sinh (k \cdot op)}\sin (\angle oyp)\ge \sin (\angle oyp)\ge \frac{\sinh kR_1}{\sinh kR_2}.$$
This proves \eqref{ineq: 2}.
The inequality \eqref{eq: ineq} follows from \eqref{ineq: 1} and \eqref{ineq: 2}, together with $|\phi|\le 1$.
\end{proof}
We are now ready to prove our main result.
\begin{proof}[Proof of Theorem \ref{thm: m<0}]
$X$ can be expressed as
\begin{equation*}
\begin{split}
X&=\frac{1}{k}(\sinh (kr)Y, \cosh kr)=\frac{1}{k}(\sinh (kr) y_1, \cdots, \sinh (kr) y_n, \cosh kr).
\end{split}
\end{equation*}
where $\displaystyle|Y|^2=\sum_{i=1}^n y_i^2=1.$
Without loss of generality we can assume that
$\zeta=(\zeta_1,\cdots, \zeta_n, 1)$ where $\displaystyle\sum_{i=1}^n \zeta_i^2=1$.

Let $\displaystyle \phi=\sum_{i=1}^n y_i \zeta_i$, then Lemma \ref{lem: der} implies (we omit $d\Sigma_\rho$ for convenience)
\begin{equation}\label{eq: W}
\begin{split}
&
\frac{d}{d\rho}(\int_{\Sigma_\rho}(H_0-H)W(\rho, p)\cdot \zeta)\\
=&
-\int_{\Sigma_\rho}u^{-1}(u-1)^2  (\frac{1}{2}(R^\rho+(n-1)(n-2)k^2)(\phi\sinh kr -\alpha \cosh kr)  \\
&+ H_0\frac{\partial }{\partial \rho}(\phi\sinh kr -\alpha \cosh kr))\\
=&
-\int_{\Sigma_\rho}u^{-1}(u-1)^2  (\frac{1}{2}(H_0^2-|A|^2)(\phi\sinh kr -\alpha \cosh kr) \\
 &+ H_0\frac{\partial }{\partial \rho}(\phi\sinh kr -\alpha \cosh kr))\\
=&
-\int_{\Sigma_\rho}u^{-1}(u-1)^2  B \quad\text{where}
\end{split}
\end{equation}
\begin{equation}\label{eq: B}
\begin{split}
B
=&\frac{1}{2}(H_0^2-|A|^2)(\phi \sinh kr -\alpha \cosh kr)\\
&+kH_0 (\phi \cosh kr \frac{\partial r}{\partial \rho}+\frac{1}{k} \sinh kr \frac {\partial \phi}{\partial \rho}- \alpha \sinh kr \frac{\partial r}{\partial \rho}).
\end{split}
\end{equation}

Here $A$ is the second fundamental form of $\Sigma_\rho$ with respect to the hyperbolic metric. Let $\lambda_a(p, \rho)$ be the principal curvature of $\Sigma_\rho$. Then $\lambda_a =k \tanh k(\mu_a+\rho)$, $k$, or $k \coth k(\mu_a+\rho )$ with $\mu_a>0$ (\cite{WY} p.255). In particular,
\begin{equation*}
H_0^2-|A|^2=2\sum_{a<b}\lambda_a \lambda_b\geq 0.
\end{equation*}

We want to show $B\leq 0$. For the first term of B, consider
\begin{equation}\label{eq: pos I}
\begin{split}
\phi \sinh kr -\alpha \cosh kr\leq \sinh kr -\cosh kr<0.
\end{split}
\end{equation}
To show that the last term of R.H.S. of \eqref{eq: B} is also negative, it suffices to show
 $$\phi \cosh kr \frac{\partial r}{\partial \rho}+\frac{1}{k}\sinh kr \frac{\partial \phi}{\partial \rho}-\alpha \sinh kr \frac{\partial r}{\partial \rho}<0.$$
Indeed, by \eqref{eq: ineq}, we have $\frac{\partial r}{\partial \rho}>0$ and
\begin{equation*}
\begin{split}
&\phi \cosh kr \frac{\partial r}{\partial \rho}+\frac{1}{k}\sinh kr \frac{\partial \phi}{\partial \rho}-\alpha \sinh kr \frac{\partial r}{\partial \rho}\\
\leq& \cosh kr \frac{\partial r}{\partial \rho} +\frac{1}{k}\sinh kr (\mu k \frac{\partial r}{\partial \rho})-\alpha \sinh kr \frac{\partial r}{\partial \rho}\\
=&(\cosh kr +\sinh kr(\mu -\alpha))\frac{\partial r}{\partial \rho}\\
=&(\cosh kr -\sinh kr \coth kR_1)\frac{\partial r}{\partial \rho}
<0 \quad \text{\quad as $r>R_1$.}
\end{split}
\end{equation*}
Substituting into \eqref{eq: W}, we have
$$\frac{d}{d\rho}(\int_{\Sigma_\rho}(H_0-H)W(\rho, p)\cdot \zeta)\geq 0.$$
By Theorem \ref{thm: PMT} and Corollary \ref{corollary}, we conclude that
$$m(\Omega, \zeta)=\int_{\Sigma}(H_0-H) W \cdot \zeta\leq 0.$$
The proof is completed.
\end{proof}

\begin{remark}
  As remarked in \cite{ShiTam2007} Example 3.11-3.13, in some situations, $\alpha$ in Theorem \ref{thm: m<0} can be chosen to be $1$. Indeed, it is not hard to see that this is possible in the following cases:
  \begin{enumerate}
    \item
    $\Sigma$ is isometric to a standard sphere and $H$ is constant, or
    \item
    $\Sigma$ is isometric to a standard sphere and $H$ is orthogonal to the first eigenfunctions of $\mathbb S^{n-1}$ (i.e. restriction of the position functions of $\mathbb S^{n-1}\subset \mathbb R^n$). Here we assume that $0=(0, \cdots, 0, 1/k)$ is the center of the geodesic ball in $\mathbb H^n_{-k^2}$ enclosed by $\Sigma_0$.
  \end{enumerate}
\end{remark}

\end{document}